\documentclass[12pt,twoside,reqno]{amsart}
\linespread{1.05}
\usepackage[colorlinks=false,citecolor=blue]{hyperref}
\usepackage{mathptmx, amsmath, amssymb, amsfonts, amsthm, mathptmx, enumerate, color,mathrsfs}
\setlength{\textheight}{23cm}
\setlength{\textwidth}{16cm}
\setlength{\oddsidemargin}{0cm}
\setlength{\evensidemargin}{0cm}
\setlength{\topmargin}{0cm}

\usepackage{graphicx}
\newcommand{\vertiii}[1]{{\left\vert\kern-0.25ex\left\vert\kern-0.25ex\left\vert #1 
    \right\vert\kern-0.25ex\right\vert\kern-0.25ex\right\vert}}

\usepackage{epstopdf}
\newtheorem{theorem}{Theorem}[section]
\newtheorem{lemma}{Lemma}[section]
\newtheorem{remark}{Remark}[section]

\newtheorem{corollary}{Corollary}[section]

\newtheorem{proposition}{Proposition}[section]
\numberwithin{equation}{section}

\begin{document}
\title{Tightening Bounds on the Numerical Radius for Hilbert Space Operators}
\author{Maryam Jalili and Hamid Reza Moradi}
\subjclass[2010]{Primary 47A12; Secondary 47A30, 47A63, 15A60}
\keywords{Spectral radius, numerical radius, operator norm, inequality, operator matrix}

\begin{abstract}
Let $S$ be a bounded linear operator on a Hilbert space. We show that if $S$ is accretive (resp. dissipative the sense that $\frac{S-{{S}^{*}}}{2i}$ is positive) in the sense that $\frac{S+{{S}^{*}}}{2}$ is positive, then
\[\frac{\sqrt{3}}{3}\left\| S \right\|\le \omega \left( S \right),\]
where $\left\| \cdot \right\| $ and $\omega \left( \cdot \right)$ denote the operator norm and the numerical radius, respectively.
\end{abstract}

\maketitle
\pagestyle{myheadings}
\markboth{\centerline{}}
{\centerline{}}
\bigskip
\bigskip

\section{Introduction}
Let $\mathbb{H}$ be an arbitrary Hilbert space, endowed with the inner product $\left<\cdot,\cdot\right>$ and induced norm $\|\cdot\|.$ The notation $\mathbb{B}(\mathbb{H})$ will be used to denote the $C^*$-algebra of all bounded linear operators on $\mathbb{H}$. Upper case letters will be used to denote the element of $\mathbb{B}(\mathbb{H})$. For $S\in\mathbb{B}(\mathbb{H})$, the adjoint operator $S^*$ is the operator defined by $\left<Sx,y\right>=\left<x,S^*y\right>$ for $x,y\in\mathbb{H}$, and the operator norm of $S$ is defined by $\|S\|=\sup_{\|x\|=1}\|Sx\|$. If an operator $S\in\mathbb{B}(\mathbb{H})$ satisfies $\left<Sx,x\right>\geq 0$ for all $x\in\mathbb{H}$, it will be a positive operator.  In this case, we write $S\ge O$. Here $\left| S \right|$ stands for the positive operator ${{\left( {{S}^{*}}S \right)}^{\frac{1}{2}}}$. Related to the operator norm, the numerical radius of $S\in\mathbb{B}(\mathbb{H})$ is defined by $\omega(S)=\sup_{\|x\|=1}|\left<Sx,x\right>|$. This latter quantity defines a norm on $\mathbb{B}(\mathbb{H})$ that is equivalent to the operator norm, where we have the equivalence
\begin{equation}\label{eq_equiv}
\frac{1}{2}\|S\|\leq \omega(S)\leq \|S\|, S\in\mathbb{B}(\mathbb{H}).
\end{equation} 
Several numerical radius inequalities that improve upon those in \eqref{eq_equiv} have been recently established in \cite{HOM, KMS, SM2023, SMS, S, Yamazaki_Studia_2007}.

Sharpening the second inequality in \eqref{eq_equiv}, Kittaneh showed that \cite{Kittaneh_Studia_2003}
\begin{equation}\label{eq_intro_kitt_1}
\omega \left( S \right)\le \frac{1}{2}\left( \left\| S \right\|+{{\left\| {{S}^{2}} \right\|}^{\frac{1}{2}}} \right).
\end{equation}
Let $r\left( \cdot \right)$ denote the spectral radius. It has been shown in \cite[Theorem 2.1]{BP} that
\begin{equation}\label{9}
\omega \left( S \right)\le \frac{1}{2}\left( \left\| S \right\|+\sqrt{r\left( \left| S \right|\left| {{S}^{*}} \right| \right)} \right).
\end{equation}
Of course \eqref{9} improves \eqref{eq_intro_kitt_1} since 
\[\sqrt{r\left( \left| S \right|\left| {{S}^{*}} \right| \right)}\le {{\left\| \;\left| S \right|\left| {{S}^{*}} \right|\; \right\|}^{\frac{1}{2}}}={{\left\| {{S}^{2}} \right\|}^{\frac{1}{2}}}.\]
A further upper estimate of the numerical radius can be found in \cite[Corollary 3.3]{Heydarbeygi}, presented as follows
\begin{equation}\label{29}
{{\omega }^{2}}\left( S \right)\le \frac{1}{4}\left\| {{\left| S \right|}^{2}}+{{\left| {{S}^{*}} \right|}^{2}} \right\|+\frac{1}{2}\omega \left( \left| S \right|\left| {{S}^{*}} \right| \right).
\end{equation}

Let $S\in\mathbb{B}(\mathbb{H})$. $S$ is accretive (resp. dissipative) if in its Cartesian decomposition $S=\Re S+i\Im S$, $\Re S=\frac{S+{{S}^{*}}}{2}$ (resp. $\Im S=\frac{S-{{S}^{*}}}{2i}$) is positive and $S$ is accretive-dissipative if both $\Re T$ and $\Im S$ are positive. 

The first inequality in \eqref{eq_equiv} has been improved considerably in \cite{MS}. Indeed, it has been shown in \cite[Theorem 2.6]{MS} that if $S$ is an accretive-dissipative operator, then
\begin{equation}\label{12}
\frac{\sqrt{2}}{2}\left\| S \right\|\le \omega \left( S \right).
\end{equation}

As we proceed in this paper, the following basic facts will be used (see, e.g., \cite[Ineq. (2.3) and Ineq. (2.4)]{OM}):
\begin{equation}\label{21}
\left\| \Re S \right\|,\left\| \Im S \right\|\le \omega \left( S \right),
\end{equation}
for any $S\in\mathbb{B}(\mathbb{H})$.

The paper includes two main results, namely Corollary \ref{10} and Theorem \ref{8}. Our first main result concerns an improvement for \eqref{9}, which is as follows: 
\begin{equation}\label{11}
\omega \left( S \right)\le \frac{1}{2}\max \left\{ \left\| \left| {{S}^{*}} \right|+{{\left( {{\left| {{S}^{*}} \right|}^{\frac{1}{2}}}\left| S \right|\left| {{S}^{*}} \right|^{\frac{1}{2}} \right)}^{\frac{1}{2}}} \right\|,\left\| \left| S \right|+{{\left( {{\left| S \right|}^{\frac{1}{2}}}\left| {{S}^{*}} \right|\left| S \right|^{\frac{1}{2}} \right)}^{\frac{1}{2}}} \right\| \right\}.
\end{equation}
To achieve \eqref{11}, we propose an upper bound for $2 \times 2$ operator matrices $\omega \left( \left[ \begin{matrix}
   O & S  \\
   {{T}^{*}} & O  \\
\end{matrix} \right] \right)$. 

Our second main result is similar to \eqref{11}, with the difference that the condition involving accretive-dissipative has been replaced by a weaker condition involving accretive (or dissipative). The derived bound leads to an improvement for the first inequality in \eqref{eq_equiv}. In fact, we will demonstrate that if $S\in \mathbb B\left( \mathbb H \right)$ is such that $S$ is accretive (or dissipative), then
\[\frac{\sqrt{3}}{3}\left\| S \right\|\le \omega \left( S \right).\]

To prove our main result, we need some lemmas. Kittaneh proved the first lemma in \cite[Corollary 1]{kmy1}.
\begin{lemma}\label{k1}
 Let $A,B\in \mathbb B\left( \mathbb H \right)$ be positive. Then
\begin{equation}
\left\| A+B \right\|\le \max \left\{ \left\| A+\left| {{B}^{\frac{1}{2}}}{{A}^{\frac{1}{2}}} \right| \right\|,\left\| B+\left| {{A}^{\frac{1}{2}}}{{B}^{\frac{1}{2}}} \right| \right\| \right\}.
\end{equation}	
\end{lemma}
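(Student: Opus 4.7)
My proof plan is to reduce $\|A+B\|$ to the norm of a positive $2\times 2$ operator matrix and then estimate the resulting quadratic form by Cauchy-Schwarz combined with polar decomposition. First I would set $Z = \begin{pmatrix}A^{1/2} & B^{1/2}\end{pmatrix}\colon\mathbb{H}\oplus\mathbb{H}\to\mathbb{H}$, so that a direct calculation gives $ZZ^* = A+B$ and
\[
Z^*Z = \begin{pmatrix} A & A^{1/2}B^{1/2}\\ B^{1/2}A^{1/2} & B\end{pmatrix}=:M.
\]
The identity $\|ZZ^*\|=\|Z^*Z\|$ then yields $\|A+B\|=\|M\|$, and since $M\ge O$ one has $\|M\|=\sup_{\|\xi\|=1}\langle M\xi,\xi\rangle$, so it suffices to bound this supremum.

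Next I would set $X=A^{1/2}B^{1/2}$ and use its polar decomposition $X=V|X|$, together with the standard identity $V|X|V^*=|X^*|$. For a unit vector $\xi=(x,y)^{\top}\in\mathbb{H}\oplus\mathbb{H}$,
\[
\langle M\xi,\xi\rangle = \langle Ax,x\rangle + 2\,\Re\langle Xy,x\rangle + \langle By,y\rangle,
\]
and Cauchy-Schwarz followed by AM-GM applied to the cross-term yields
\[
2|\Re\langle Xy,x\rangle| = 2\bigl|\langle |X|^{1/2}y,\,|X|^{1/2}V^*x\rangle\bigr| \le \langle |X|y,y\rangle + \langle V|X|V^*x,x\rangle = \langle |X|y,y\rangle + \langle |X^*|x,x\rangle.
\]
Combining these estimates and using $\|x\|^2+\|y\|^2=1$,
\[
\langle M\xi,\xi\rangle \le \langle(A+|X^*|)x,x\rangle + \langle(B+|X|)y,y\rangle \le \max\bigl\{\|A+|X^*|\|,\,\|B+|X|\|\bigr\}.
\]

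Identifying $|X| = (X^*X)^{1/2} = (B^{1/2}AB^{1/2})^{1/2} = |A^{1/2}B^{1/2}|$ and $|X^*| = (XX^*)^{1/2} = (A^{1/2}BA^{1/2})^{1/2} = |B^{1/2}A^{1/2}|$ will then recover exactly the inequality stated in Lemma~\ref{k1}. The main delicate point is handling the polar decomposition when $X$ has nontrivial kernel, so that $V$ is only a partial isometry; the key identity $V|X|V^*=|X^*|$ remains valid in that generality, and the Cauchy-Schwarz step above never uses unitarity of $V$, so the argument goes through unchanged. If I wished to avoid this subtlety entirely, I would perturb $A\mapsto A+\varepsilon I$ to make $X$ invertible, carry out the estimate for the perturbed operators, and pass $\varepsilon\to 0^+$ using norm continuity of the functional calculus involved.
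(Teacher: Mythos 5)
Your proof is correct, and it is a complete, self-contained argument: the paper itself does not prove Lemma~\ref{k1} but merely cites it as \cite[Corollary 1]{kmy1}, and your route (writing $A+B=ZZ^*$ with $Z=(A^{1/2}\ B^{1/2})$, passing to the positive block matrix $Z^*Z$, and bounding its quadratic form via polar decomposition, Cauchy--Schwarz and AM--GM) is essentially Kittaneh's original argument for that result. All the delicate points check out: $V|X|V^*=|X^*|$ does hold for the partial isometry in the polar decomposition, and $|X|=|A^{1/2}B^{1/2}|$, $|X^*|=|B^{1/2}A^{1/2}|$ match the two terms in the stated maximum.
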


The second lemma represents an improvement of the basic triangle inequality (for more details, see \cite[p. 27]{batia-princ}).
\begin{lemma}\label{2}
Let $A,B\in \mathbb B\left( \mathbb H \right)$ be self-adjoint. Then
\[\left\| A+B \right\|\le \left\| \;\left| A \right|+\left| B \right| \;\right\|.\]
\end{lemma}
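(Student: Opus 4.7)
The plan is to deduce the inequality from the elementary operator-order fact that every self-adjoint operator is bounded, in the Löwner order, by its absolute value. First I would recall that for a self-adjoint operator $X \in \mathbb{B}(\mathbb{H})$ with spectral decomposition $X = X_+ - X_-$ (where $X_\pm \ge O$ and $X_+X_- = O$), one has $|X| = X_+ + X_-$, whence $|X| - X = 2X_- \ge O$ and $|X| + X = 2X_+ \ge O$. Thus
\[
-|X| \le X \le |X|.
\]
Applying this to $A$ and $B$ and summing gives $-(|A|+|B|) \le A+B \le |A|+|B|$.

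Next I would convert this two-sided operator inequality into a norm inequality. For any unit vector $x \in \mathbb{H}$, the above sandwich implies
\[
-\langle (|A|+|B|)x,x\rangle \le \langle (A+B)x,x\rangle \le \langle (|A|+|B|)x,x\rangle,
\]
so $|\langle (A+B)x,x\rangle| \le \langle (|A|+|B|)x,x\rangle$. Because $A+B$ is self-adjoint, $\|A+B\| = \sup_{\|x\|=1}|\langle (A+B)x,x\rangle|$, and because $|A|+|B| \ge O$, $\||A|+|B|\| = \sup_{\|x\|=1}\langle (|A|+|B|)x,x\rangle$. Taking the supremum of both sides over unit vectors $x$ therefore yields $\|A+B\| \le \| \,|A|+|B|\,\|$, which is the desired inequality.

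The main (and only really nontrivial) point is justifying the passage from the operator-order sandwich to the norm inequality, and this rests on the basic identity $\|X\| = \omega(X)$ for self-adjoint $X$ together with the fact that $\omega(C) = \|C\|$ whenever $C$ is positive. Both are standard, so I do not expect serious obstacles. An alternative route would be to write $A = A_+ - A_-$ and $B = B_+ - B_-$ and use the fact that for a self-adjoint operator $C = P - N$ with $P,N \ge O$ one has $\|C\| \le \max\{\|P\|,\|N\|\} \le \|P+N\|$; applied to $A+B = (A_+ + B_+) - (A_- + B_-)$, this again gives the claim. I would choose the order-theoretic route as it is the most transparent.
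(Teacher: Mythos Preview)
Your argument is correct and is exactly the standard route: from $-|X|\le X\le |X|$ for self-adjoint $X$ one gets the two-sided operator inequality $-(|A|+|B|)\le A+B\le |A|+|B|$, and then the identity $\|C\|=\sup_{\|x\|=1}|\langle Cx,x\rangle|$ for self-adjoint $C$ converts this into the norm bound. Nothing is missing.

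As for comparison with the paper: the paper does not actually prove this lemma. It is stated as a preliminary fact with a citation to Bhatia's \emph{Positive Definite Matrices}, p.~27, and no argument is given in the text. Your self-contained proof is precisely the one that reference would lead to, so there is no meaningful methodological difference to discuss.
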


It should be interesting to notice that the above lemma is also true when $A, B$ are two normal matrices. This result has been proved in \cite[Corollary 3.2]{SGM}.

The reader is referred to the proof of the third lemma in \cite[(4.6)]{HKS}.
\begin{lemma}\label{3}
Let $A,B\in \mathbb B\left( \mathbb H \right)$. Then
\[\frac{1}{2}\underset{\theta \in \mathbb{R}}{\mathop{\sup }}\,\left\| A+{{e}^{i\theta }}B \right\|=\omega \left( \left[ \begin{matrix}
   O & A  \\
   {{B}^{*}} & O  \\
\end{matrix} \right] \right).\]
\end{lemma}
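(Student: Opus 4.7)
The plan is to exploit the standard identity
\[\omega(T)=\sup_{\theta\in\mathbb{R}}\|\Re(e^{i\theta}T)\|,\]
valid for every $T\in\mathbb{B}(\mathbb{H})$. One recovers this by writing $|\langle Tx,x\rangle|=\sup_\theta \Re(e^{i\theta}\langle Tx,x\rangle)$ for each unit vector $x$, interchanging the two suprema, and invoking $\|H\|=\sup_{\|x\|=1}|\langle Hx,x\rangle|$ for the self-adjoint operator $H=\Re(e^{i\theta}T)$.

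Setting $T=\left[\begin{matrix} O & A \\ B^* & O \end{matrix}\right]$ so that $T^*=\left[\begin{matrix} O & B \\ A^* & O\end{matrix}\right]$, the first concrete step is to compute
\[\Re(e^{i\theta}T)=\frac{1}{2}\left[\begin{matrix} O & e^{i\theta}A+e^{-i\theta}B \\ (e^{i\theta}A+e^{-i\theta}B)^* & O\end{matrix}\right].\]
Next I would use the elementary fact that any self-adjoint off-diagonal block operator of the form $\left[\begin{matrix} O & X \\ X^* & O\end{matrix}\right]$ has norm exactly $\|X\|$; this is immediate from squaring it and noting that the result is block-diagonal with entries $XX^*$ and $X^*X$, of norm $\|X\|^2$. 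Applied with $X=e^{i\theta}A+e^{-i\theta}B$, this gives
\[\|\Re(e^{i\theta}T)\|=\frac{1}{2}\|e^{i\theta}A+e^{-i\theta}B\|=\frac{1}{2}\|A+e^{-2i\theta}B\|,\]
the last equality because multiplication by the unimodular scalar $e^{i\theta}$ is an isometry.

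The conclusion then follows from the substitution $\varphi=-2\theta$: since $\theta$ ranges over $\mathbb{R}$, so does $\varphi$, whence
\[\omega(T)=\sup_{\theta\in\mathbb{R}}\frac{1}{2}\|A+e^{-2i\theta}B\|=\frac{1}{2}\sup_{\varphi\in\mathbb{R}}\|A+e^{i\varphi}B\|,\]
which is the stated identity. There is no real obstacle in this argument; the only conceptual input is the real-part characterization of the numerical radius, after which the block structure reduces the claim to pure algebra.
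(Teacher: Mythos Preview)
Your argument is correct and is the standard proof of this identity. The paper itself does not prove this lemma; it merely cites \cite[(4.6)]{HKS}, where essentially the same real-part computation you carried out appears.
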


Kittaneh also proved the following three lemmas in 2008 and 2004.
\begin{lemma}\label{14}
\cite[Theorem 1]{K} Let $S,T\in \mathbb B\left( \mathbb H \right)$ be such that $S$ or $T$ is positive. Then
\[\left\| ST-TS \right\|\le \left\| S \right\|\left\| T \right\|.\]
\end{lemma}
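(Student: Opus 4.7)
The plan is to exploit the shift-invariance of the commutator: for any real scalar $\lambda$, the relation $[\lambda I, T] = O$ gives
\[ST - TS = (S - \lambda I)T - T(S - \lambda I),\]
so applying the triangle inequality and submultiplicativity of the operator norm would produce
\[\|ST - TS\| \le 2\|S - \lambda I\|\,\|T\|.\]
The core of the argument is then to choose $\lambda$ so that $\|S - \lambda I\|$ is at most $\|S\|/2$, which exactly cancels the factor of two.

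Without loss of generality I would assume $S \ge O$; the remaining case follows by swapping the roles of $S$ and $T$ and noting that $\|ST - TS\| = \|TS - ST\|$. Since $S$ is positive and self-adjoint, its spectrum is contained in $[0, \|S\|]$, so I would invoke the continuous functional calculus to obtain
\[\left\| S - \tfrac{1}{2}\|S\|\, I \right\| \;=\; \sup_{t \in \sigma(S)} \bigl| t - \tfrac{1}{2}\|S\| \bigr| \;\le\; \tfrac{1}{2}\|S\|.\]
Plugging $\lambda = \|S\|/2$ back into the earlier estimate would then finish the proof.

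The only nontrivial input is the spectral identity in the display above, which is a standard consequence of the functional calculus for self-adjoint operators; everything else is elementary. The place where positivity genuinely enters is in forcing the spectrum into $[0, \|S\|]$ rather than the symmetric interval $[-\|S\|, \|S\|]$ available for a general self-adjoint operator, and this is precisely what allows the optimal shift to halve the norm. For this reason I do not anticipate a serious obstacle in carrying out the argument; the whole proof should be essentially a two-line computation once the commutator-shift identity is written down.
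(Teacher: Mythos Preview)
Your argument is correct: the commutator is invariant under the shift $S \mapsto S - \lambda I$, and for positive $S$ the choice $\lambda = \tfrac{1}{2}\|S\|$ centers the spectrum in $[-\tfrac{1}{2}\|S\|, \tfrac{1}{2}\|S\|]$, so the spectral radius formula for self-adjoint operators gives $\|S - \lambda I\| \le \tfrac{1}{2}\|S\|$ and the factor of two disappears. Note, however, that the paper does not supply its own proof of this lemma---it is quoted verbatim from Kittaneh's paper \cite[Theorem~1]{K}---so there is no in-paper argument to compare against. For what it is worth, the proof you have written is precisely Kittaneh's original one.
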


\begin{lemma}\label{17}
\cite[Corollary 1]{Kittaneh2004} Let $A,B\in \mathbb B\left( \mathbb H \right)$ be positive. Then
\[\max \left\{ \left\| A \right\|,\left\| B \right\| \right\}-\left\| {{A}^{\frac{1}{2}}}{{B}^{\frac{1}{2}}} \right\|\le \left\| A-B \right\|.\]
\end{lemma}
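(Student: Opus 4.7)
The strategy is to derive a scalar inequality for $\alpha := \|A\|$ and $\beta := \|A^{1/2}B^{1/2}\|$ from the operator inequality $A \le B + \|A-B\| I$, and then extract the claimed bound from that scalar inequality. By the symmetry of the statement in $A$ and $B$, I can reduce to the case $\|A\| \ge \|B\|$, which I assume throughout. The key observation driving everything is that conjugating the bound $A \le B + \|A - B\| I$ by $A^{1/2}$, rather than by something of the form $A^{1/2} - B^{1/2}$, is what allows the $C^{*}$-identity to convert $\|A^{1/2} B A^{1/2}\|$ into the square of the quantity $\|A^{1/2} B^{1/2}\|$ that appears in the statement.

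Writing $c := \|A-B\|$, self-adjointness of $A - B$ gives $A \le B + cI$, and since $A^{1/2}\ge 0$ the conjugation step yields $A^{2} \le A^{1/2} B A^{1/2} + c A$. Taking operator norms (monotone on positive operators) and applying the triangle inequality produces $\|A\|^{2} \le \|A^{1/2} B A^{1/2}\| + c\|A\|$. Applying the $C^{*}$-identity to $T = B^{1/2}A^{1/2}$ rewrites $\|A^{1/2} B A^{1/2}\| = \|T^{*}T\| = \|T\|^{2} = \|A^{1/2}B^{1/2}\|^{2} = \beta^{2}$, so the whole estimate collapses to the scalar bound $\alpha(\alpha - c) \le \beta^{2}$. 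In addition, the routine bound $\beta^{2} = \|A^{1/2} B A^{1/2}\| \le \|A\|\|B\| \le \alpha^{2}$ gives $\beta \le \alpha$.

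From $\alpha(\alpha - c) \le \beta^{2}$ the desired $\alpha - c \le \beta$ follows: if $\alpha \ge c$, then $(\alpha - c)^{2} \le \alpha(\alpha - c) \le \beta^{2}$, hence $\alpha - c \le \beta$; if $\alpha < c$ the inequality is trivial since $\beta \ge 0$. This is exactly $\|A\| - \|A^{1/2}B^{1/2}\| \le \|A-B\|$. I do not anticipate a genuine obstacle, but the one conceptual move worth flagging is the choice of conjugator $A^{1/2}$; any other natural choice (for instance, attempting to control $\|A^{1/2} - B^{1/2}\|$ directly and invoke an inequality such as $\|A^{1/2} - B^{1/2}\|^{2} \le \|A - B\|$) does not align the algebra with the $C^{*}$-identity as cleanly and seems to leave an extra factor of $\|A\|^{1/2}$ on the right-hand side.
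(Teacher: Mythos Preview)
Your argument is correct. The key steps --- the operator inequality $A \le B + \|A-B\| I$, conjugation by $A^{1/2}$, the $C^{*}$-identity $\|A^{1/2}BA^{1/2}\| = \|A^{1/2}B^{1/2}\|^{2}$, and the elementary extraction of $\alpha - c \le \beta$ from $\alpha(\alpha - c) \le \beta^{2}$ --- are all sound. One minor remark: the auxiliary bound $\beta \le \alpha$ you record is not actually used in the final deduction; the case split on $\alpha \ge c$ versus $\alpha < c$ suffices on its own.

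As for comparison: the paper does not prove this lemma at all. It is quoted verbatim from \cite[Corollary 1]{Kittaneh2004} as a preliminary tool and invoked later in the proof of the bound on $\|S\|^{2} - \|S^{2}\|$. So there is no in-paper proof to match against; your self-contained argument simply fills the gap left by the citation.
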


\begin{lemma}\label{22}
\cite{K2008} Let $A,B\in \mathbb B\left( \mathbb H \right)$ and let $A$ be normal with the Cartesian decomposition $A=\Re A+i\Im A$. Then, for every unitarily invariant norm,
	\[{{\vertiii{ AB-BA }}}\le \sqrt{{{\left\| \Re A \right\|}^{2}}+{{\left\| \Im A \right\|}^{2}}}\;{{\vertiii{ B }}},\]
when $\Re A,\Im A\ge O$. 
\end{lemma}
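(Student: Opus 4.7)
The strategy I would pursue is a \emph{spectral centering} argument. The identity $AB - BA = (A - cI)B - B(A - cI)$ holds for every scalar $c$, so I would choose $c$ optimally to minimize $\|A - cI\|$ and then invoke the triangle inequality together with the routine fact that $\vertiii{MB},\vertiii{BM}\le \|M\|\,\vertiii{B}$ for every unitarily invariant norm. Because $A$ is normal with commuting positive real and imaginary parts $\Re A$ and $\Im A$, one has tight control on $(A-cI)^*(A-cI)$, which makes this centering effective.

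Concretely, I would set $P=\Re A$, $Q=\Im A$, and take $c=\tfrac{\|P\|}{2}+i\tfrac{\|Q\|}{2}$. Since $0\le P\le \|P\|\,I$ and $0\le Q\le \|Q\|\,I$, the self-adjoint operators $P-\tfrac{\|P\|}{2}I$ and $Q-\tfrac{\|Q\|}{2}I$ have operator norms at most $\|P\|/2$ and $\|Q\|/2$, respectively. Using that $P$ and $Q$ commute (from normality of $A$), the cross-term in the expansion vanishes and
\[(A-cI)^*(A-cI)=\left(P-\tfrac{\|P\|}{2}I\right)^{2}+\left(Q-\tfrac{\|Q\|}{2}I\right)^{2}\le \tfrac{\|P\|^{2}+\|Q\|^{2}}{4}\,I,\]
whence $\|A-cI\|\le \tfrac{1}{2}\sqrt{\|P\|^{2}+\|Q\|^{2}}$.

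Finally, since scalars commute with $B$, I would conclude
\[\vertiii{AB-BA}=\vertiii{(A-cI)B-B(A-cI)}\le 2\|A-cI\|\,\vertiii{B}\le \sqrt{\|\Re A\|^{2}+\|\Im A\|^{2}}\;\vertiii{B}.\]

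The step I expect to be the main obstacle is recognizing that centering is the right move. A naive route would apply the Kittaneh-type bound $\vertiii{PB-BP}\le \|P\|\,\vertiii{B}$ (from Lemma \ref{14} and its unitarily invariant analogue) to each Cartesian part of $A$ separately and then triangle-inequality, yielding only the weaker $(\|\Re A\|+\|\Im A\|)\,\vertiii{B}$. The sharper $\sqrt{\|\Re A\|^{2}+\|\Im A\|^{2}}$ bound reflects the geometry of $\sigma(A)$ as a subset of the rectangle $[0,\|\Re A\|]\times[0,\|\Im A\|]\subset \mathbb{C}$, whose diameter is exactly this quantity; the centering choice of $c$ is precisely what converts this geometric fact into the operator inequality, and commutativity of $\Re A$ and $\Im A$ is indispensable for the clean sum-of-squares estimate.
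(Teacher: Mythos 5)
Your centering argument is correct: with $c=\tfrac{\|\Re A\|}{2}+i\tfrac{\|\Im A\|}{2}$, normality makes the cross term in $(A-cI)^{*}(A-cI)$ vanish, giving $\|A-cI\|\le\tfrac{1}{2}\sqrt{\|\Re A\|^{2}+\|\Im A\|^{2}}$, and the ideal property of unitarily invariant norms finishes the estimate. The paper itself gives no proof of this lemma (it is quoted from \cite{K2008}), and your argument is essentially the one used in that reference, so there is nothing to reconcile.
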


The following lemma is recognized as the mixed Cauchy-Schwarz inequality in the literature. For its proof, see \cite{Kittaneh1988}.
\begin{lemma}\label{27}
Let $A\in \mathbb B\left( \mathbb H \right)$. If $f$, $g$ are nonnegative continuous functions on $\left[ 0,\infty  \right)$ satisfying $f\left( t \right)g\left( t \right)=t\left( t\ge 0 \right)$, then
\[\left| \left\langle Ax,y \right\rangle  \right|\le \sqrt{\left\langle {{f}^{2}}\left( \left| A \right| \right)x,x \right\rangle \left\langle {{g}^{2}}\left( \left| {{A}^{*}} \right| \right)x,x \right\rangle };\;\left( x,y\in \mathbb H \right).\]
\end{lemma}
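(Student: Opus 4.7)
The plan is to reduce the assertion to the scalar Cauchy--Schwarz inequality via the polar decomposition $A = U|A|$, where $U$ is the partial isometry satisfying $U^*U|A| = |A|$. Since $f$ and $g$ are continuous and nonnegative on $[0,\infty)$ with $f(t)g(t) = t$, the continuous functional calculus at the positive operator $|A|$ produces the factorization $|A| = f(|A|)\,g(|A|)$, and because $f(|A|)$ is self-adjoint one can pair the two factors across the inner product:
\[\langle Ax, y\rangle \;=\; \langle U f(|A|)\, g(|A|)\, x,\, y\rangle \;=\; \langle f(|A|)\, x,\; g(|A|)\, U^* y\rangle.\]

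I would then apply the scalar Cauchy--Schwarz inequality to the pairing on the right and square it to obtain
\[|\langle Ax, y\rangle|^2 \;\le\; \|f(|A|) x\|^2 \cdot \|g(|A|) U^* y\|^2 \;=\; \langle f^2(|A|) x, x\rangle \cdot \langle U g^2(|A|) U^* y,\, y\rangle.\]
It remains to bound $\langle U g^2(|A|) U^* y,\, y\rangle$ above by $\langle g^2(|A^*|) y,\, y\rangle$, i.e.\ to establish the operator inequality $U g^2(|A|) U^* \le g^2(|A^*|)$. The starting identity $|A^*| = U|A|U^*$ follows from $|A^*|^2 = AA^* = U|A|^2 U^*$ together with uniqueness of the positive square root; combined with $U^*U|A| = |A|$, a short induction gives $U|A|^n U^* = |A^*|^n$ for every $n \ge 1$. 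For an arbitrary polynomial $p$ one therefore has $U p(|A|) U^* = p(|A^*|) - p(0)(I - UU^*)$, and Weierstrass approximation on the compact spectrum $\sigma(|A|)$ lifts the relation to every continuous function in place of $p$. Applied to $\varphi = g^2$, the correction term $g^2(0)(I - UU^*)$ is nonnegative, whence $U g^2(|A|) U^* \le g^2(|A^*|)$ and the desired bound follows.

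I expect the main obstacle to be this functional-calculus comparison $U\varphi(|A|) U^* \le \varphi(|A^*|)$: since $U$ is only a partial isometry and not a unitary, the naive identity $U\varphi(|A|) U^* = \varphi(|A^*|)$ picks up a defect $\varphi(0)(I - UU^*)$ supported on $\ker|A^*|$. Fortunately this defect has the correct sign when $\varphi \ge 0$, preserving the direction of the Cauchy--Schwarz bound. Once this point is settled the result is immediate, and the symmetric version with the roles of $f$ and $g$ interchanged follows by repeating the argument with $A^*$ in place of $A$.
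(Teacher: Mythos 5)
Your argument is correct and is essentially the standard proof from the cited source \cite{Kittaneh1988} (the paper itself gives no proof, only the reference): polar decomposition $A=U|A|$, the splitting $\langle Ax,y\rangle=\langle f(|A|)x,\,g(|A|)U^{*}y\rangle$, Cauchy--Schwarz, and the comparison $U\varphi(|A|)U^{*}=\varphi(|A^{*}|)-\varphi(0)(I-UU^{*})\le\varphi(|A^{*}|)$ for $\varphi\ge 0$; your care with the defect term on $\ker|A^{*}|$ is exactly the right point to worry about (approximate uniformly on $[0,\|A\|]$ rather than just on $\sigma(|A|)$ so that both spectra are covered). One remark: what you proved is the inequality with $\left\langle g^{2}(|A^{*}|)y,y\right\rangle$ in the second factor, which is the correct statement; the display in the lemma as printed has $x$ in both slots and is false as written (scale $y$), so this is a typo in the paper. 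Since the lemma is only ever invoked with $y=x$, nothing downstream is affected.
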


The final lemma we include here is the Hölder-McCarthy inequality \cite[Theorem 1.2]{Micic}.
\begin{lemma}\label{28}
Let $A\in \mathbb B\left( \mathbb H \right)$ be positive. Then for any $r\ge 1$
\[{{\left\langle Ax,x \right\rangle }^{r}}\le \left\langle {{A}^{r}}x,x \right\rangle ;\;\left( x\in \mathbb H,\left\| x \right\|=1 \right).\]
\end{lemma}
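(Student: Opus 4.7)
My plan is to reduce the operator inequality to a scalar Jensen inequality via the spectral theorem. I would first invoke the spectral theorem for the positive operator $A$, which provides a projection-valued measure $E$ supported on $\mathrm{sp}(A)\subset[0,\|A\|]$ such that $A = \int \lambda\, dE(\lambda)$, and, by functional calculus, $A^{r} = \int \lambda^{r}\, dE(\lambda)$ for any $r\ge 1$.

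Next, for a fixed unit vector $x\in\mathbb{H}$, I would introduce the scalar Borel measure $\mu_{x}(\Delta) := \langle E(\Delta)x, x\rangle$ on $[0,\|A\|]$. Because $E$ is projection-valued and $\|x\|=1$, the measure $\mu_{x}$ is a probability measure, and the spectral calculus yields
\[\langle Ax, x\rangle = \int \lambda\, d\mu_{x}(\lambda), \qquad \langle A^{r}x, x\rangle = \int \lambda^{r}\, d\mu_{x}(\lambda).\]

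Finally, since $t\mapsto t^{r}$ is convex on $[0,\infty)$ for every $r\ge 1$, the classical scalar Jensen inequality applied to $\mu_{x}$ gives
\[\left(\int \lambda\, d\mu_{x}(\lambda)\right)^{\!r}\; \le\; \int \lambda^{r}\, d\mu_{x}(\lambda),\]
which, on substituting the two identities above, is exactly $\langle Ax, x\rangle^{r}\le \langle A^{r}x, x\rangle$.

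There is essentially no obstacle here: the positivity of $A$ places its spectrum precisely in the region where $t\mapsto t^{r}$ is convex, and the normalization $\|x\|=1$ is exactly what turns $\mu_{x}$ into a probability measure, so the hypotheses of the lemma line up perfectly with those of scalar Jensen. As an alternative, one can handle integer exponents $r=n$ by induction from the Cauchy--Schwarz consequence $\langle A^{n}x, x\rangle^{2}\le \langle A^{n-1}x, x\rangle\,\langle A^{n+1}x, x\rangle$ (log-convexity of $n\mapsto \langle A^{n}x, x\rangle$) and then reach fractional $r$ by interpolation, but the spectral route is cleaner and delivers all $r\ge 1$ simultaneously.
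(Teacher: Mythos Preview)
Your argument is correct: the spectral theorem reduces the inequality to scalar Jensen for the convex function $t\mapsto t^{r}$ against the probability measure $\mu_{x}=\langle E(\cdot)x,x\rangle$, and the hypotheses line up exactly as you say. There is nothing to compare against, however, since the paper does not prove this lemma at all; it is stated as the classical H\"older--McCarthy inequality and simply cited from \cite[Theorem 1.2]{Micic}. Your spectral/Jensen proof is the standard one found in that literature, so you have effectively supplied what the paper chose to import.
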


\section{Results}
We begin this section with the following result, which proposes an upper bound for $\omega \left( S \right)$.
\begin{theorem}
Let $S\in \mathbb B\left( \mathbb H \right)$. If $f$, $g$ are nonnegative continuous functions on $\left[ 0,\infty  \right)$ satisfying $f\left( t \right)g\left( t \right)=t\left( t\ge 0 \right)$, then
\[{{\omega }^{2}}\left( S \right)\le \frac{1}{4}\left\| \Re\left( {{f}^{4}}\left( \left| S \right| \right)+{{g}^{4}}\left( \left| {{S}^{*}} \right| \right)+2{{f}^{2}}\left( \left| S \right| \right){{g}^{2}}\left( \left| {{S}^{*}} \right| \right) \right) \right\|.\]
\end{theorem}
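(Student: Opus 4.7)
The plan is to recognize that the right-hand side, despite its complicated appearance, is just the square of $P := f^{2}(|S|) + g^{2}(|S^{*}|)$. Indeed, since $f^{2}(|S|)$ and $g^{2}(|S^{*}|)$ are self-adjoint, expanding the square gives
\[
P^{2} = f^{4}(|S|) + g^{4}(|S^{*}|) + f^{2}(|S|)g^{2}(|S^{*}|) + g^{2}(|S^{*}|)f^{2}(|S|),
\]
and the two cross terms sum to $2\Re\!\bigl(f^{2}(|S|)g^{2}(|S^{*}|)\bigr)$. Hence the claim is equivalent to
\[
\omega^{2}(S) \le \tfrac{1}{4}\,\|P^{2}\|,
\]
which, since $P$ is positive (so $\|P^{2}\| = \|P\|^{2}$), is equivalent to $\omega(S) \le \tfrac{1}{2}\|P\|$.

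So the proof reduces to establishing this cleaner inequality. For that I would apply the mixed Cauchy--Schwarz inequality (Lemma \ref{27}) with $y=x$ and $\|x\|=1$, then square to get
\[
|\langle Sx,x\rangle|^{2} \le \langle f^{2}(|S|)x,x\rangle\,\langle g^{2}(|S^{*}|)x,x\rangle.
\]
The elementary AM--GM bound $\sqrt{ab}\le (a+b)/2$ with $a=\langle f^{2}(|S|)x,x\rangle$ and $b=\langle g^{2}(|S^{*}|)x,x\rangle$ yields
\[
|\langle Sx,x\rangle| \le \tfrac{1}{2}\,\langle Px,x\rangle.
\]
Taking the supremum over unit vectors and using that $P\ge O$ forces $\|P\| = \sup_{\|x\|=1}\langle Px,x\rangle$, I obtain $\omega(S) \le \tfrac{1}{2}\|P\|$, as required.

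Squaring and rewriting $\|P\|^{2} = \|P^{2}\|$, then substituting the expansion of $P^{2}$ computed at the outset, delivers the stated bound. There is no real obstacle in the argument; the only point to be careful about is the non-commutativity of $f^{2}(|S|)$ and $g^{2}(|S^{*}|)$, which is precisely why the statement of the theorem must write the cross term inside an explicit $\Re(\cdot)$ rather than simply as $2f^{2}(|S|)g^{2}(|S^{*}|)$.
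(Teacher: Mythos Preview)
Your proof is correct and follows essentially the same route as the paper: mixed Cauchy--Schwarz (Lemma~\ref{27}) followed by AM--GM to reach $|\langle Sx,x\rangle| \le \tfrac12\langle Px,x\rangle$ with $P=f^{2}(|S|)+g^{2}(|S^{*}|)$, then passing to the supremum. The only cosmetic difference is that the paper squares first and invokes H\"older--McCarthy (Lemma~\ref{28}) to pass from $\langle Px,x\rangle^{2}$ to $\langle P^{2}x,x\rangle$ before taking the supremum, whereas you take the supremum first and then use $\|P\|^{2}=\|P^{2}\|$; your observation that the right-hand side is exactly $\tfrac14\|P^{2}\|$ makes this a clean shortcut.
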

\begin{proof}
Let $x\in \mathbb H$ be a unit vector. We have
\[\begin{aligned}
  & {{\left| \left\langle Sx,x \right\rangle  \right|}^{2}} \\ 
 & \le \left\langle {{f}^{2}}\left( \left| S \right| \right)x,x \right\rangle \left\langle {{g}^{2}}\left( \left| {{S}^{*}} \right| \right)x,x \right\rangle  \\ 
 & \le {{\left( \frac{1}{2}\left( \left\langle {{f}^{2}}\left( \left| S \right| \right)x,x \right\rangle +\left\langle {{g}^{2}}\left( \left| {{S}^{*}} \right| \right)x,x \right\rangle  \right) \right)}^{2}} \\ 
 & =\frac{1}{4}{{\left\langle \left( {{f}^{2}}\left( \left| S \right| \right)+{{g}^{2}}\left( \left| {{S}^{*}} \right| \right) \right)x,x \right\rangle }^{2}} \\ 
 & \le \frac{1}{4}\left\langle {{\left( {{f}^{2}}\left( \left| S \right| \right)+{{g}^{2}}\left( \left| {{S}^{*}} \right| \right) \right)}^{2}}x,x \right\rangle  \\ 
 & =\frac{1}{4}\left\langle \left( {{f}^{4}}\left( \left| S \right| \right)+{{g}^{4}}\left( \left| {{S}^{*}} \right| \right)+{{f}^{2}}\left( \left| S \right| \right){{g}^{2}}\left( \left| {{S}^{*}} \right| \right)+{{g}^{2}}\left( \left| {{S}^{*}} \right| \right){{f}^{2}}\left( \left| S \right| \right) \right)x,x \right\rangle  \\ 
 & =\frac{1}{4}\left\langle \left( {{f}^{4}}\left( \left| S \right| \right)+{{g}^{4}}\left( \left| {{S}^{*}} \right| \right)+2\Re\left( {{f}^{2}}\left( \left| S \right| \right){{g}^{2}}\left( \left| {{S}^{*}} \right| \right) \right) \right)x,x \right\rangle  \\ 
 & =\frac{1}{4}\left\langle \Re\left( {{f}^{4}}\left( \left| S \right| \right)+{{g}^{4}}\left( \left| {{S}^{*}} \right| \right)+2{{f}^{2}}\left( \left| S \right| \right){{g}^{2}}\left( \left| {{S}^{*}} \right| \right) \right)x,x \right\rangle   
\end{aligned}\]
where the initial inequality is derived from Lemma \ref{27}, the subsequent inequality results from the arithmetic-geometric mean inequality and the third inequality follows from Lemma \ref{28}. Indeed, we have demonstrated that
\[{{\left| \left\langle Sx,x \right\rangle  \right|}^{2}}\le \frac{1}{4}\left\langle \Re\left( {{f}^{4}}\left( \left| S \right| \right)+{{g}^{4}}\left( \left| {{S}^{*}} \right| \right)+2{{f}^{2}}\left( \left| S \right| \right){{g}^{2}}\left( \left| {{S}^{*}} \right| \right) \right)x,x \right\rangle \]
for any unit vector $x\in \mathbb H$. By taking the supremum over all unit vectors $x\in \mathbb H$, we get
\[{{\omega }^{2}}\left( S \right)\le \frac{1}{4}\left\| \Re\left( {{f}^{4}}\left( \left| S \right| \right)+{{g}^{4}}\left( \left| {{S}^{*}} \right| \right)+2{{f}^{2}}\left( \left| S \right| \right){{g}^{2}}\left( \left| {{S}^{*}} \right| \right) \right) \right\|,\]
as required.
\end{proof}

\begin{corollary}
Let $S\in \mathbb B\left( \mathbb H \right)$. Then for any $0 \le t \le 1$
\[{{\omega }^{2}}\left( S \right)\le \frac{1}{4}\left\| \Re\left( {{\left| S \right|}^{4\left( 1-t \right)}}+{{\left| {{S}^{*}} \right|}^{4t}}+2{{\left| S \right|}^{2\left( 1-t \right)}}{{\left| {{S}^{*}} \right|}^{2t}} \right) \right\|.\]	
In particular,
\begin{equation}\label{30}
{{\omega }^{2}}\left( S \right)\le \frac{1}{4}\left\| \Re\left( {{\left| S \right|}^{2}}+{{\left| {{S}^{*}} \right|}^{2}}+2\left| S \right|\left| {{S}^{*}} \right| \right) \right\|.
\end{equation}
\end{corollary}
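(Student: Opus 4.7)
The plan is to derive this corollary as a direct specialization of the preceding theorem by choosing the pair $(f,g)$ so that the functional calculus produces powers of $|S|$ and $|S^*|$. To avoid the notational clash between the parameter $t \in [0,1]$ appearing in the corollary and the dummy variable $t$ in the theorem's hypothesis, I would first rename the argument of $f$ and $g$ to $s$, so the constraint reads $f(s)g(s) = s$ for $s \ge 0$.

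With that in place, for a fixed $t \in [0,1]$ I would take $f(s) := s^{1-t}$ and $g(s) := s^{t}$. These are nonnegative and continuous on $[0,\infty)$, and they obviously satisfy $f(s)g(s) = s^{1-t}\cdot s^{t} = s$, so the hypothesis of the theorem is met. Applying the functional calculus to the positive operators $|S|$ and $|S^*|$ gives
\[f^{4}(|S|) = |S|^{4(1-t)}, \qquad g^{4}(|S^*|) = |S^*|^{4t}, \qquad f^{2}(|S|)\,g^{2}(|S^*|) = |S|^{2(1-t)}\,|S^*|^{2t}.\]
Substituting these three identities into the bound furnished by the theorem immediately yields the first inequality of the corollary.

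For the ``in particular'' clause, I would simply specialize $t = 1/2$ in the inequality just obtained; the exponents $4(1-t)$, $4t$, $2(1-t)$, and $2t$ all collapse to $2$, $2$, $1$, $1$ respectively, producing the displayed bound \eqref{30}.

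There is essentially no obstacle: the argument is a one-line substitution, and the only thing to be careful about is the renaming of the dummy variable in the hypothesis so that $t$ may be used as the interpolation parameter in the conclusion. Accordingly, the write-up can be kept to a couple of sentences.
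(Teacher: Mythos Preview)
Your proposal is correct and matches precisely what the paper intends: the corollary is stated without proof immediately after the theorem, and the only way to obtain it is by the substitution $f(s)=s^{1-t}$, $g(s)=s^{t}$ you describe, followed by setting $t=\tfrac12$ for the particular case. There is nothing to add.
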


\begin{remark}\label{15}
The constant $\frac{1}{4}$ is best possible in \eqref{30}. Indeed, if we assume that \eqref{13} holds with a constant $c>0$, i.e.,
\[{{\omega }^{2}}\left( S \right)\le c\left\| \Re\left( {{\left| S \right|}^{2}}+{{\left| {{S}^{*}} \right|}^{2}}+2\left| S \right|\left| {{S}^{*}} \right| \right) \right\|,\]
 then selecting $S$ as a normal operator (satisfying ${{S}^{*}}S=S{{S}^{*}}$) and utilizing the property that for normal operator $\omega \left( S \right)=\left\| S \right\|$, we conclude that $1\le 4c$ thereby establishing the optimality of the constant.
\end{remark}

\begin{remark}
The fact that
\[\begin{aligned}
   {{\omega }^{2}}\left( S \right)&\le \frac{1}{4}\left\| \Re\left( {{\left| S \right|}^{2}}+{{\left| {{S}^{*}} \right|}^{2}}+2\left| S \right|\left| {{S}^{*}} \right| \right) \right\| \\ 
 & \le \frac{1}{4}\omega \left( {{\left| S \right|}^{2}}+{{\left| {{S}^{*}} \right|}^{2}}+2\left| S \right|\left| {{S}^{*}} \right| \right) \\ 
 & \le \frac{1}{4}\omega \left( {{\left| S \right|}^{2}}+{{\left| {{S}^{*}} \right|}^{2}} \right)+\frac{1}{2}\omega \left( \left| S \right|\left| {{S}^{*}} \right| \right) \\ 
 & =\frac{1}{4}\left\| {{\left| S \right|}^{2}}+{{\left| {{S}^{*}} \right|}^{2}} \right\|+\frac{1}{2}\omega \left( \left| S \right|\left| {{S}^{*}} \right| \right)  
\end{aligned}\]
is apparent. This implies that our result improves upon \eqref{29}.
\end{remark}

We adopt some strategies from \cite[Theorem 2.1]{SMK} in the following result.
\begin{theorem}\label{24}
Let $S, T\in \mathbb B\left( \mathbb H \right)$. Then 
\[\omega \left( \left[ \begin{matrix}
   O & S  \\
   {{T}^{*}} & O  \\
\end{matrix} \right] \right)\le \frac{1}{2}\max \left\{ \alpha ,\beta  \right\},\]
where
\[\alpha =\max \left\{ \left\| \left| {{S}^{*}} \right|+{{\left( {{\left| {{S}^{*}} \right|}^{\frac{1}{2}}}\left| {{T}^{*}} \right|{{\left| {{S}^{*}} \right|}^{\frac{1}{2}}} \right)}^{\frac{1}{2}}} \right\|,\left\| \left| S \right|+{{\left( {{\left| S \right|}^{\frac{1}{2}}}\left| T \right|{{\left| S \right|}^{\frac{1}{2}}} \right)}^{\frac{1}{2}}} \right\| \right\},\]
and
\[\beta =\max \left\{ \left\| \left| {{T}^{*}} \right|+{{\left( {{\left| {{T}^{*}} \right|}^{\frac{1}{2}}}\left| {{S}^{*}} \right|{{\left| {{T}^{*}} \right|}^{\frac{1}{2}}} \right)}^{\frac{1}{2}}} \right\|,\left\| \left| T \right|+{{\left( {{\left| T \right|}^{\frac{1}{2}}}\left| S \right|{{\left| T \right|}^{\frac{1}{2}}} \right)}^{\frac{1}{2}}} \right\| \right\}.\]
\end{theorem}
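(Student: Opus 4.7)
The plan is to use Lemma \ref{3} to reduce the numerical radius of the off-diagonal block matrix to a one-parameter supremum of operator norms, then to bound that norm uniformly by a maximum of the two scalar quantities $\||S|+|T|\|$ and $\||S^*|+|T^*|\|$, and finally to unpack each of these sums of positives via Lemma \ref{k1}. Concretely, by Lemma \ref{3},
\[\omega\left(\begin{bmatrix} O & S \\ T^* & O \end{bmatrix}\right)=\frac{1}{2}\sup_{\theta\in\mathbb R}\|S+e^{i\theta}T\|,\]
so it suffices to show that $\|S+e^{i\theta}T\|\le\max\{\alpha,\beta\}$ for every real $\theta$.

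To produce the central estimate, fix $\theta$ and unit vectors $x,y\in\mathbb H$, and begin with the elementary triangle inequality
\[|\langle(S+e^{i\theta}T)x,y\rangle|\le|\langle Sx,y\rangle|+|\langle Tx,y\rangle|.\]
Apply the mixed Cauchy-Schwarz inequality (Lemma \ref{27} with $f(t)=g(t)=\sqrt t$, in its standard form with $y$ in the second slot) to each summand, and then use the numerical inequality $\sqrt{ab}+\sqrt{cd}\le\sqrt{(a+c)(b+d)}$ to merge them; this yields
\[|\langle(S+e^{i\theta}T)x,y\rangle|\le\sqrt{\langle(|S|+|T|)x,x\rangle\,\langle(|S^*|+|T^*|)y,y\rangle}.\]
Bounding each inner product by the operator norm of its associated positive operator, taking the supremum over $\|x\|=\|y\|=1$, and finally using $\sqrt{ab}\le\max\{a,b\}$, we arrive at
\[\|S+e^{i\theta}T\|\le\max\bigl\{\||S|+|T|\|,\||S^*|+|T^*|\|\bigr\},\]
a bound independent of $\theta$.

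It remains to estimate the two norms on the right. Apply Lemma \ref{k1} separately with $(A,B)=(|S|,|T|)$ and $(A,B)=(|S^*|,|T^*|)$; using the identity $|Y^{1/2}X^{1/2}|=(X^{1/2}YX^{1/2})^{1/2}$, the first application majorises $\||S|+|T|\|$ by the maximum of the second entry of $\alpha$ and the second entry of $\beta$, while the second application majorises $\||S^*|+|T^*|\|$ by the maximum of the first entries. Taking the larger of the two outer maxima reconstitutes precisely $\max\{\alpha,\beta\}$, as required. I expect the main obstacle to be recognising that the naive bound $\|S+e^{i\theta}T\|\le\||S|+|T|\|$, which would hold via Lemma \ref{2} if $S$ and $T$ were self-adjoint, actually fails in general (rank-one examples already defeat it); the mixed Schwarz step is what symmetrises the argument by forcing the companion factor $\||S^*|+|T^*|\|$ into play, and it is exactly this symmetry that makes the two separate invocations of Lemma \ref{k1} match the four summands of $\max\{\alpha,\beta\}$.
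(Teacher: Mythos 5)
Your proof is correct, but it follows a genuinely different route from the paper's. The paper first combines Lemma \ref{2} with Lemma \ref{k1} to get the self-adjoint estimate $\left\| A+B \right\|\le \max \left\{ \left\| \left| A \right|+\left| {{\left| B \right|}^{1/2}}{{\left| A \right|}^{1/2}} \right| \right\|,\left\| \left| B \right|+\left| {{\left| A \right|}^{1/2}}{{\left| B \right|}^{1/2}} \right| \right\| \right\}$, and then applies it to the self-adjoint dilations $\left[ \begin{smallmatrix} O & S \\ S^* & O \end{smallmatrix} \right]$ and $\left[ \begin{smallmatrix} O & T \\ T^* & O \end{smallmatrix} \right]$, computing the block absolute values explicitly (they are $\mathrm{diag}(|S^*|,|S|)$ and $\mathrm{diag}(|T^*|,|T|)$) and using $\left\| \left[ \begin{smallmatrix} O & X \\ Y & O \end{smallmatrix} \right] \right\|=\max\{\|X\|,\|Y\|\}$ to land on $\|S+T\|\le\max\{\alpha,\beta\}$ in one stroke; the passage to $e^{i\theta}T$ and Lemma \ref{3} is then identical to yours. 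You instead bypass the dilation entirely: your mixed Cauchy--Schwarz argument (with the corrected placement of $y$, since the paper's statement of Lemma \ref{27} has a typo there) plus the elementary estimate $\sqrt{ab}+\sqrt{cd}\le\sqrt{(a+c)(b+d)}$ establishes the intermediate inequality $\|S+e^{i\theta}T\|\le\sqrt{\left\| \left| S \right|+\left| T \right| \right\|\,\left\| \left| S^* \right|+\left| T^* \right| \right\|}$, which is actually slightly stronger than the max you then pass to, and two applications of Lemma \ref{k1} on $\mathbb H$ (rather than one on $\mathbb H\oplus\mathbb H$) recover exactly the four norms constituting $\max\{\alpha,\beta\}$; your bookkeeping of which application produces which entries of $\alpha$ and $\beta$ checks out. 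Your approach buys self-containedness at the vector level, avoids any functional calculus of block operators, and isolates a geometric-mean refinement of the key norm bound; the paper's approach buys brevity by delegating all the work to the known self-adjoint case via the standard dilation trick. Your closing observation is also right: Lemma \ref{2} genuinely fails for non-normal operators, which is precisely why the paper must dilate and why you must symmetrise via the mixed Schwarz inequality.
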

\begin{proof}
Applying Lemma \ref{k1} on the right-hand side of Lemma \ref{2} implies the relation
\begin{equation}\label{k2}
\left\| A+B \right\|\le \max \left\{ \left\| \left| A \right|+\left| {{\left| B \right|}^{\frac{1}{2}}}{{\left| A \right|}^{\frac{1}{2}}} \right| \right\|,\left\| \left| B \right|+\left| {{\left| A \right|}^{\frac{1}{2}}}{{\left| B \right|}^{\frac{1}{2}}} \right| \right\| \right\}
\end{equation}
that holds for self-adjoint operators  $A,B\in \mathbb B\left( \mathbb H \right)$.
Now, let $S,T\in \mathbb B\left( \mathbb H \right)$ be a pair of arbitrary operators and let $A=\left[ \begin{matrix}
   O & S  \\
   {{S}^{*}} & O  \\
\end{matrix} \right]$, $B=\left[ \begin{matrix}
   O & T  \\
   {{T}^{*}} & O  \\
\end{matrix} \right]$.
Obviously, $A$ and $B$ are self-adjoint, so utilizing \eqref{k2}, we have that
\[\begin{aligned}
   \left\| A+B \right\|&=\left\| \left[ \begin{matrix}
   O & S+T  \\
   {{S}^{*}}+{{T}^{*}} & O  \\
\end{matrix} \right] \right\| \\
 & \le \max \left\{ \left\| \left| \left[ \begin{matrix}
   O & S  \\
   {{S}^{*}} & O  \\
\end{matrix} \right] \right|+\left| {{\left| \left[ \begin{matrix}
   O & T  \\
   {{T}^{*}} & O  \\
\end{matrix} \right] \right|}^{\frac{1}{2}}}{{\left| \left[ \begin{matrix}
   O & S  \\
   {{S}^{*}} & O  \\
\end{matrix} \right] \right|}^{\frac{1}{2}}} \right| \right\|,\right. \\
 &\qquad \left. \left\| \left| \left[ \begin{matrix}
   O & T  \\
   {{T}^{*}} & O  \\
\end{matrix} \right] \right|+\left| {{\left| \left[ \begin{matrix}
   O & S  \\
   {{S}^{*}} & O  \\
\end{matrix} \right] \right|}^{\frac{1}{2}}}{{\left| \left[ \begin{matrix}
   O & T  \\
   {{T}^{*}} & O  \\
\end{matrix} \right] \right|}^{\frac{1}{2}}} \right| \right\| \right\} \\
 & =\max \left\{ \left\| \left[ \begin{matrix}
   \left| {{S}^{*}} \right|+{{\left( {{\left| {{S}^{*}} \right|}^{\frac{1}{2}}}\left| {{T}^{*}} \right|{{\left| {{S}^{*}} \right|}^{\frac{1}{2}}} \right)}^{\frac{1}{2}}} & O  \\
   O & \left| S \right|+{{\left( {{\left| S \right|}^{\frac{1}{2}}}\left| T \right|{{\left| S \right|}^{\frac{1}{2}}} \right)}^{\frac{1}{2}}}  \\
\end{matrix} \right] \right\|, \right. \\
 &\qquad \left. \left\| \left[ \begin{matrix}
   \left| {{T}^{*}} \right|+{{\left( {{\left| {{T}^{*}} \right|}^{\frac{1}{2}}}\left| {{S}^{*}} \right|{{\left| {{T}^{*}} \right|}^{\frac{1}{2}}} \right)}^{\frac{1}{2}}} & O  \\
   O & \left| T \right|+{{\left( {{\left| T \right|}^{\frac{1}{2}}}\left| S \right|{{\left| T \right|}^{\frac{1}{2}}} \right)}^{\frac{1}{2}}}  \\
\end{matrix} \right] \right\| \right\} \\
 & =\max \left\{ \alpha ,\beta  \right\},
\end{aligned}\]
where
\[\alpha =\max \left\{ \left\| \left| {{S}^{*}} \right|+{{\left( {{\left| {{S}^{*}} \right|}^{\frac{1}{2}}}\left| {{T}^{*}} \right|{{\left| {{S}^{*}} \right|}^{\frac{1}{2}}} \right)}^{\frac{1}{2}}} \right\|,\left\| \left| S \right|+{{\left( {{\left| S \right|}^{\frac{1}{2}}}\left| T \right|{{\left| S \right|}^{\frac{1}{2}}} \right)}^{\frac{1}{2}}} \right\| \right\},\]
and
\[\beta =\max \left\{ \left\| \left| {{T}^{*}} \right|+{{\left( {{\left| {{T}^{*}} \right|}^{\frac{1}{2}}}\left| {{S}^{*}} \right|{{\left| {{T}^{*}} \right|}^{\frac{1}{2}}} \right)}^{\frac{1}{2}}} \right\|,\left\| \left| T \right|+{{\left( {{\left| T \right|}^{\frac{1}{2}}}\left| S \right|{{\left| T \right|}^{\frac{1}{2}}} \right)}^{\frac{1}{2}}} \right\| \right\}.\]
Here, the first equality sign holds due to
$$
\left[ \begin{matrix}
   |S^{*}|& O  \\
   O & |S|  \\
\end{matrix} \right]^{2}=\left[ \begin{matrix}
   |S^{*}|^{2}& O  \\
   O & |S|^{2}  \\
\end{matrix} \right]=\left[ \begin{matrix}
   SS^{*}& O  \\
   O & S^{*}S  \\
\end{matrix} \right]=\left[ \begin{matrix}
   O& S  \\
   S^{*} & O  \\
\end{matrix} \right]^{2},
$$
while the second  sign is a consequence of the relation
\begin{equation}\label{pomoc}\left\| \left[ \begin{matrix}
   O & X  \\
   Y & O  \\
\end{matrix} \right] \right\|=\max \left\{ \left\| X \right\|,\left\| Y \right\| \right\},
\end{equation}
valid for $X,Y\in \mathbb B\left( \mathbb H \right)$. Finally, another application of \eqref{pomoc}
provides
$$
 \left\| A+B \right\|=\left\| \left[ \begin{matrix}
   O & S+T  \\
   {{S}^{*}}+{{T}^{*}} & O  \\
\end{matrix} \right] \right\|=\max \left\{ \left\| S+T \right\|,\left\| (S+T)^{*} \right\| \right\}=\left\| S+T \right\|,
$$
which in turn implies that
\begin{equation}\label{1}
\left\| S+T \right\|\le \max \left\{ \alpha ,\beta  \right\}.
\end{equation}
If we replace $T$ by ${{e}^{i\theta }}T$, in \eqref{1}, we infer that
\[\left\| S+{{e}^{i\theta }}T \right\|\le \max \left\{ \alpha ,\beta  \right\}.\]
Now, by taking the supremum over $\theta \in \mathbb{R}$, and use Lemma \ref{3}, we obtain
\[\omega \left( \left[ \begin{matrix}
   O & S  \\
   {{T}^{*}} & O  \\
\end{matrix} \right] \right)\le \frac{1}{2}\max \left\{ \alpha ,\beta  \right\},\]
as required.
\end{proof}

\begin{remark}
Here, it is pertinent to point out that Theorem \ref{24} improves upon \cite[Theorem 2.1]{SMK}, because 
\[\left\| A+B \right\|\le 2\omega \left( \left[ \begin{matrix}
   O & A  \\
   {{B}^{*}} & O  \\
\end{matrix} \right] \right).\]
To learn more about the above inequality, see the proof of Theorem 4.6 in \cite{HKS}.
\end{remark}

The case ${{T}^{*}}=S$ in Theorem \ref{24} introduces one of the main results of this paper. In the forthcoming result, we make use of the following evidence:
\[\omega \left( \left[ \begin{matrix}
   O & S  \\
   S & O  \\
\end{matrix} \right] \right)=\omega \left( S \right).\]
\begin{corollary}\label{10}
Let $S\in \mathbb B\left( \mathbb H \right)$. Then 
\[\begin{aligned}
   \omega \left( S \right)&\le \frac{1}{2}\max \left\{ \left\| \left| {{S}^{*}} \right|+{{\left( {{\left| {{S}^{*}} \right|}^{\frac{1}{2}}}\left| S \right|\left| {{S}^{*}} \right| ^{\frac{1}{2}}\right)}^{\frac{1}{2}}} \right\|,\left\| \left| S \right|+{{\left( {{\left| S \right|}^{\frac{1}{2}}}\left| {{S}^{*}} \right|\left| S \right| ^{\frac{1}{2}}\right)}^{\frac{1}{2}}} \right\| \right\} \\ 
 & \le \frac{1}{2}\left( \left\| S \right\|+\sqrt{r\left( \left| S \right|\left| {{S}^{*}} \right| \right)} \right).  
\end{aligned}\]
\end{corollary}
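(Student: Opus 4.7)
The plan is to derive the first inequality as a direct specialization of Theorem~\ref{24} and then obtain the second by a short application of the triangle inequality together with basic spectral-radius identities.

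First I would set $T=S^{*}$ in Theorem~\ref{24}, so that $T^{*}=S$, $|T^{*}|=|S|$ and $|T|=|S^{*}|$. Substituting these identities into the formulas for $\alpha$ and $\beta$ from Theorem~\ref{24} shows that both quantities collapse to the same unordered pair of norms, so
\[\max\{\alpha,\beta\}=\max\Bigl\{\bigl\|\,|S^{*}| + (|S^{*}|^{1/2}|S||S^{*}|^{1/2})^{1/2}\bigr\|,\ \bigl\|\,|S| + (|S|^{1/2}|S^{*}||S|^{1/2})^{1/2}\bigr\|\Bigr\}.\]
Combined with the identity
\[\omega\!\left(\left[\begin{matrix} O & S \\ S & O \end{matrix}\right]\right)=\omega(S)\]
recorded immediately before the corollary, Theorem~\ref{24} then yields the first inequality.

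For the second inequality I would bound each term of the outer maximum by the triangle inequality. For the first,
\[\bigl\|\,|S^{*}| + (|S^{*}|^{1/2}|S||S^{*}|^{1/2})^{1/2}\bigr\| \le \bigl\|\,|S^{*}|\,\bigr\| + \bigl\|(|S^{*}|^{1/2}|S||S^{*}|^{1/2})^{1/2}\bigr\|.\]
The first summand equals $\|S\|$. The second equals $\sqrt{r(|S^{*}|^{1/2}|S||S^{*}|^{1/2})}$, since for a positive operator the norm coincides with the spectral radius and $\|X^{1/2}\|=\|X\|^{1/2}$; the cyclic identity $r(XY)=r(YX)$ with $X=|S^{*}|^{1/2}$ and $Y=|S||S^{*}|^{1/2}$ rewrites this as $\sqrt{r(|S||S^{*}|)}$. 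The second term of the outer maximum is handled symmetrically, and one further use of $r(XY)=r(YX)$ gives $r(|S^{*}||S|)=r(|S||S^{*}|)$, so $\|S\|+\sqrt{r(|S||S^{*}|)}$ dominates both terms simultaneously.

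There is no substantive obstacle here: the only bookkeeping step is checking that under the substitution $T=S^{*}$ the two expressions $\alpha$ and $\beta$ of Theorem~\ref{24} really describe the same unordered pair of norms. Once that is noted, every remaining step is a one-line consequence of the triangle inequality and the cyclic-permutation invariance of the spectral radius, and the corollary follows immediately.
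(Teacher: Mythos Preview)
Your proof is correct and follows essentially the same route as the paper: specialize Theorem~\ref{24} with $T^{*}=S$ (noting $\alpha=\beta$ and using the identity $\omega\bigl(\begin{smallmatrix}O&S\\S&O\end{smallmatrix}\bigr)=\omega(S)$), then apply the triangle inequality and reduce the square-root term to $\sqrt{r(|S||S^{*}|)}$. The only cosmetic difference is that the paper passes through the intermediate quantity $\bigl\|\,|S^{*}|^{1/2}|S|^{1/2}\bigr\|$ before invoking the spectral radius, whereas you apply $r(XY)=r(YX)$ directly; both arrive at the same bound.
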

\begin{proof}
Letting ${{T}^{*}}=S$, in Theorem \ref{24}, we have
\begin{equation}\label{13}
\omega \left( S \right)\le \frac{1}{2}\max \left\{ \left\| \left| {{S}^{*}} \right|+{{\left( {{\left| {{S}^{*}} \right|}^{\frac{1}{2}}}\left| S \right|\left| {{S}^{*}} \right|^{\frac{1}{2}} \right)}^{\frac{1}{2}}} \right\|,\left\| \left| S \right|+{{\left( {{\left| S \right|}^{\frac{1}{2}}}\left| {{S}^{*}} \right|\left| S \right|^{\frac{1}{2}} \right)}^{\frac{1}{2}}} \right\| \right\}.
\end{equation}
On the other hand, we know that
\[\begin{aligned}
  & \max \left\{ \left\| \left| {{S}^{*}} \right|+{{\left( {{\left| {{S}^{*}} \right|}^{\frac{1}{2}}}\left| S \right|\left| {{S}^{*}} \right|^{\frac{1}{2}} \right)}^{\frac{1}{2}}} \right\|,\left\| \left| S \right|+{{\left( {{\left| S \right|}^{\frac{1}{2}}}\left| {{S}^{*}} \right|\left| S \right|^{\frac{1}{2}} \right)}^{\frac{1}{2}}} \right\| \right\} \\ 
 & \le \max \left\{ \left\| \;\left| {{S}^{*}} \right| \;\right\|+\left\| {{\left( {{\left| {{S}^{*}} \right|}^{\frac{1}{2}}}\left| S \right|\left| {{S}^{*}} \right| ^{\frac{1}{2}}\right)}^{\frac{1}{2}}} \right\|,\left\| \;\left| S \right| \;\right\|+\left\| {{\left( {{\left| S \right|}^{\frac{1}{2}}}\left| {{S}^{*}} \right|\left| S \right|^{\frac{1}{2}} \right)}^{\frac{1}{2}}} \right\| \right\} \\ 
 & =\max \left\{ \left\| S \right\|+\left\| {{\left( {{\left| {{S}^{*}} \right|}^{\frac{1}{2}}}\left| S \right|\left| {{S}^{*}} \right| ^{\frac{1}{2}}\right)}^{\frac{1}{2}}} \right\|,\left\| S \right\|+\left\| {{\left( {{\left| S \right|}^{\frac{1}{2}}}\left| {{S}^{*}} \right|\left| S \right| ^{\frac{1}{2}}\right)}^{\frac{1}{2}}} \right\| \right\} \\ 
 & =\max \left\{ \left\| S \right\|+{{\left\| {{\left| {{S}^{*}} \right|}^{\frac{1}{2}}}\left| S \right|\left| {{S}^{*}} \right| ^{\frac{1}{2}}\right\|}^{\frac{1}{2}}},\left\| S \right\|+{{\left\| {{\left| S \right|}^{\frac{1}{2}}}\left| {{S}^{*}} \right|\left| S \right| ^{\frac{1}{2}}\right\|}^{\frac{1}{2}}} \right\} \\ 
 & =\max \left\{ \left\| S \right\|+{{\left\| {{\left| {{S}^{*}} \right|}^{\frac{1}{2}}}{{\left| S \right|}^{\frac{1}{2}}}{{\left| S \right|}^{\frac{1}{2}}}\left| {{S}^{*}} \right| ^{\frac{1}{2}}\right\|}^{\frac{1}{2}}},\left\| S \right\|+{{\left\| {{\left| S \right|}^{\frac{1}{2}}}{{\left| {{S}^{*}} \right|}^{\frac{1}{2}}}{{\left| {{S}^{*}} \right|}^{\frac{1}{2}}}\left| S \right|^{\frac{1}{2}} \right\|}^{\frac{1}{2}}} \right\} \\ 
 & =\max \left\{ \left\| S \right\|+\left\| {{\left| {{S}^{*}} \right|}^{\frac{1}{2}}}{{\left| S \right|}^{\frac{1}{2}}} \right\|,\left\| S \right\|+\left\| {{\left| S \right|}^{\frac{1}{2}}}{{\left| {{S}^{*}} \right|}^{\frac{1}{2}}} \right\| \right\} \\ 
 & =\left\| S \right\|+\left\| {{\left| {{S}^{*}} \right|}^{\frac{1}{2}}}{{\left| S \right|}^{\frac{1}{2}}} \right\|. 
\end{aligned}\]
Since,
\[\begin{aligned}
   \left\| {{\left| {{S}^{*}} \right|}^{\frac{1}{2}}}{{\left| S \right|}^{\frac{1}{2}}} \right\|&={{\left\| {{\left| {{S}^{*}} \right|}^{\frac{1}{2}}}{{\left| S \right|}^{\frac{1}{2}}}{{\left| S \right|}^{\frac{1}{2}}}{{\left| {{S}^{*}} \right|}^{\frac{1}{2}}} \right\|}^{\frac{1}{2}}} \\ 
 & =\sqrt{r\left( {{\left| {{S}^{*}} \right|}^{\frac{1}{2}}}{{\left| S \right|}^{\frac{1}{2}}}{{\left| S \right|}^{\frac{1}{2}}}{{\left| {{S}^{*}} \right|}^{\frac{1}{2}}} \right)} \\ 
 & =\sqrt{r\left( \left| S \right|\left| {{S}^{*}} \right| \right)},  
\end{aligned}\]
we conclude the desired result.
\end{proof}

\begin{remark}
The constant $\frac{1}{2}$ cannot be improved in \eqref{13}. As its demonstration closely follows that of Remark \ref{15}, we have omitted the details here.
\end{remark}

Theorem \ref{24} can be extended in the following way. In the demonstration of this result, we employ the subsequent inequality, which was initially established in \cite{AS} and is valid for positive operators $A$ and $B$:
\[\left\| {{\left( \frac{A+B}{2} \right)}^{r}} \right\|\le \left\| \frac{{{A}^{r}}+{{B}^{r}}}{2} \right\|,\;r\ge 1.\]
\begin{proposition}\label{4}
Let $S, T\in \mathbb B\left( \mathbb H \right)$. Then for any $r\ge 1$
\[{{\omega }^{r}}\left( \left[ \begin{matrix}
   O & S  \\
   {{T}^{*}} & O  \\
\end{matrix} \right] \right)\le \frac{1}{2}\max \left\{ \lambda ,\mu  \right\}\]
where
\[\lambda =\max \left\{ \left\| {{\left| {{S}^{*}} \right|}^{r}}+{{\left( {{\left| {{S}^{*}} \right|}^{\frac{1}{2}}}\left| {{T}^{*}} \right|{{\left| {{S}^{*}} \right|}^{\frac{1}{2}}} \right)}^{\frac{r}{2}}} \right\|,\left\| {{\left| S \right|}^{r}}+{{\left( {{\left| S \right|}^{\frac{1}{2}}}\left| T \right|{{\left| S \right|}^{\frac{1}{2}}} \right)}^{\frac{r}{2}}} \right\| \right\},\]
and
\[\mu =\max \left\{ \left\| {{\left| {{T}^{*}} \right|}^{r}}+{{\left( {{\left| {{T}^{*}} \right|}^{\frac{1}{2}}}\left| {{S}^{*}} \right|{{\left| {{T}^{*}} \right|}^{\frac{1}{2}}} \right)}^{\frac{r}{2}}} \right\|,\left\| {{\left| T \right|}^{r}}+{{\left( {{\left| T \right|}^{\frac{1}{2}}}\left| S \right|{{\left| T \right|}^{\frac{1}{2}}} \right)}^{\frac{r}{2}}} \right\| \right\}.\]
\end{proposition}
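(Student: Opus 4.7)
The plan is to derive Proposition \ref{4} directly from Theorem \ref{24} by raising both sides of that theorem to the $r$-th power and then invoking the Aujla--Silva inequality $\|((A+B)/2)^{r}\|\le \|(A^{r}+B^{r})/2\|$ (valid for positive $A,B$ and $r\ge 1$) recalled immediately before the statement. Let $M=\left[\begin{matrix} O & S \\ T^{*} & O\end{matrix}\right]$. Theorem \ref{24} gives $\omega(M)\le \tfrac{1}{2}\max\{\alpha,\beta\}$, and since all quantities involved are nonnegative and $r\ge 1$, raising to the power $r$ and using $(\max\{a,b\})^{r}=\max\{a^{r},b^{r}\}$ for $a,b\ge 0$ yields $\omega^{r}(M)\le \tfrac{1}{2^{r}}\max\{\alpha^{r},\beta^{r}\}$. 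It therefore suffices to show that $\alpha^{r}\le 2^{r-1}\lambda$ and $\beta^{r}\le 2^{r-1}\mu$.

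For the first bound, write $\alpha=\max\{\|P_{1}+Q_{1}\|,\|P_{2}+Q_{2}\|\}$, where $P_{1}=|S^{*}|$, $Q_{1}=(|S^{*}|^{1/2}|T^{*}||S^{*}|^{1/2})^{1/2}$, $P_{2}=|S|$, and $Q_{2}=(|S|^{1/2}|T||S|^{1/2})^{1/2}$ are all positive operators. For each $i$ the sum $N_{i}:=P_{i}+Q_{i}$ is positive, so the spectral-radius identity $\|N_{i}\|^{r}=\|N_{i}^{r}\|=2^{r}\|(N_{i}/2)^{r}\|$ is at our disposal; applying Aujla--Silva bounds the last factor by $\tfrac{1}{2}\|P_{i}^{r}+Q_{i}^{r}\|$, whence $\|P_{i}+Q_{i}\|^{r}\le 2^{r-1}\|P_{i}^{r}+Q_{i}^{r}\|$. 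The key structural observation is that the outer exponent $1/2$ in $Q_{i}$ merges with $r$ to produce $Q_{1}^{r}=(|S^{*}|^{1/2}|T^{*}||S^{*}|^{1/2})^{r/2}$ and $Q_{2}^{r}=(|S|^{1/2}|T||S|^{1/2})^{r/2}$, which are exactly the expressions appearing inside $\lambda$. Consequently $\alpha^{r}\le 2^{r-1}\lambda$, and the verbatim computation with the roles of $S$ and $T$ interchanged yields $\beta^{r}\le 2^{r-1}\mu$.

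Substituting these two bounds back yields $\omega^{r}(M)\le \tfrac{1}{2^{r}}\cdot 2^{r-1}\max\{\lambda,\mu\}=\tfrac{1}{2}\max\{\lambda,\mu\}$, which is the claim. I do not anticipate a genuine obstacle here: once Theorem \ref{24} is granted, the whole argument is bookkeeping around the exponent $r$. The one detail to watch is that the identity $\|N\|^{r}=\|N^{r}\|$ is available only because $N=P_{i}+Q_{i}$ is positive, which is precisely why the preceding theorem was set up so that $\alpha$ and $\beta$ are already built from sums of positive operators to which Aujla--Silva can be applied.
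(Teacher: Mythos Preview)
Your proof is correct and follows essentially the same route as the paper: start from Theorem \ref{24}, raise to the $r$-th power, and apply the Aujla--Silva inequality to each positive sum $P_i+Q_i$ to pass from $\alpha,\beta$ to $\lambda,\mu$. Your bookkeeping of the $\max$ and the factor $2^{r-1}$ is in fact tidier than the paper's own presentation, which handles $\lambda$ and $\mu$ in separate chains before combining.
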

\begin{proof}
We have
\[\begin{aligned}
  & {{\omega }^{r}}\left( \left[ \begin{matrix}
   O & S  \\
   {{T}^{*}} & O  \\
\end{matrix} \right] \right) \\ 
 & \le {{\left( \frac{1}{2}\max \left\{ \left\| \left| {{S}^{*}} \right|+{{\left( {{\left| {{S}^{*}} \right|}^{\frac{1}{2}}}\left| {{T}^{*}} \right|{{\left| {{S}^{*}} \right|}^{\frac{1}{2}}} \right)}^{\frac{1}{2}}} \right\|,\left\| \left| S \right|+{{\left( {{\left| S \right|}^{\frac{1}{2}}}\left| T \right|{{\left| S \right|}^{\frac{1}{2}}} \right)}^{\frac{1}{2}}} \right\| \right\} \right)}^{r}} \\ 
 & =\max \left\{ {{\left\| \frac{\left| {{S}^{*}} \right|+{{\left( {{\left| {{S}^{*}} \right|}^{\frac{1}{2}}}\left| {{T}^{*}} \right|{{\left| {{S}^{*}} \right|}^{\frac{1}{2}}} \right)}^{\frac{1}{2}}}}{2} \right\|}^{r}},{{\left\| \frac{\left| S \right|+{{\left( {{\left| S \right|}^{\frac{1}{2}}}\left| T \right|{{\left| S \right|}^{\frac{1}{2}}} \right)}^{\frac{1}{2}}}}{2} \right\|}^{r}} \right\} \\ 
 & =\max \left\{ \left\| {{\left( \frac{\left| {{S}^{*}} \right|+{{\left( {{\left| {{S}^{*}} \right|}^{\frac{1}{2}}}\left| {{T}^{*}} \right|{{\left| {{S}^{*}} \right|}^{\frac{1}{2}}} \right)}^{\frac{1}{2}}}}{2} \right)}^{r}} \right\|,\left\| {{\left( \frac{\left| S \right|+{{\left( {{\left| S \right|}^{\frac{1}{2}}}\left| T \right|{{\left| S \right|}^{\frac{1}{2}}} \right)}^{\frac{1}{2}}}}{2} \right)}^{r}} \right\| \right\} \\ 
 & \le \max \left\{ \left\| \frac{{{\left| {{S}^{*}} \right|}^{r}}+{{\left( {{\left| {{S}^{*}} \right|}^{\frac{1}{2}}}\left| {{T}^{*}} \right|{{\left| {{S}^{*}} \right|}^{\frac{1}{2}}} \right)}^{\frac{r}{2}}}}{2} \right\|,\left\| \frac{{{\left| S \right|}^{r}}+{{\left( {{\left| S \right|}^{\frac{1}{2}}}\left| T \right|{{\left| S \right|}^{\frac{1}{2}}} \right)}^{\frac{r}{2}}}}{2} \right\| \right\} \\ 
 & =\frac{1}{2}\max \left\{ \left\| {{\left| {{S}^{*}} \right|}^{r}}+{{\left( {{\left| {{S}^{*}} \right|}^{\frac{1}{2}}}\left| {{T}^{*}} \right|{{\left| {{S}^{*}} \right|}^{\frac{1}{2}}} \right)}^{\frac{r}{2}}} \right\|,\left\| {{\left| S \right|}^{r}}+{{\left( {{\left| S \right|}^{\frac{1}{2}}}\left| T \right|{{\left| S \right|}^{\frac{1}{2}}} \right)}^{\frac{r}{2}}} \right\| \right\} \\ 
\end{aligned}\]
i.e.,
\[{{\omega }^{r}}\left( \left[ \begin{matrix}
   O & S  \\
   {{T}^{*}} & O  \\
\end{matrix} \right] \right)\le \frac{1}{2}\max \left\{ \left\| {{\left| {{S}^{*}} \right|}^{r}}+{{\left( {{\left| {{S}^{*}} \right|}^{\frac{1}{2}}}\left| {{T}^{*}} \right|{{\left| {{S}^{*}} \right|}^{\frac{1}{2}}} \right)}^{\frac{r}{2}}} \right\|,\left\| {{\left| S \right|}^{r}}+{{\left( {{\left| S \right|}^{\frac{1}{2}}}\left| T \right|{{\left| S \right|}^{\frac{1}{2}}} \right)}^{\frac{r}{2}}} \right\| \right\}.\]
Similarly, we can show that
\[{{\omega }^{r}}\left( \left[ \begin{matrix}
   O & S  \\
   {{T}^{*}} & O  \\
\end{matrix} \right] \right)\le \frac{1}{2}\max \left\{ \left\| {{\left| {{T}^{*}} \right|}^{r}}+{{\left( {{\left| {{T}^{*}} \right|}^{\frac{1}{2}}}\left| {{S}^{*}} \right|{{\left| {{T}^{*}} \right|}^{\frac{1}{2}}} \right)}^{\frac{r}{2}}} \right\|,\left\| {{\left| T \right|}^{r}}+{{\left( {{\left| T \right|}^{\frac{1}{2}}}\left| S \right|{{\left| T \right|}^{\frac{1}{2}}} \right)}^{\frac{r}{2}}} \right\| \right\}.\]
Combining these two inequalities implies the desired result.
\end{proof}

The scenario where $r=2$, in Proposition \ref{4}, includes interesting results as demonstrated below.
\begin{theorem}\label{5}
Let $S, T\in \mathbb B\left( \mathbb H \right)$. Then
\[{{\omega }^{2}}\left( \left[ \begin{matrix}
   O & S  \\
   {{T}^{*}} & O  \\
\end{matrix} \right] \right)\le \frac{1}{2}\max \left\{ \delta ,\xi  \right\}\]
where
\[\delta =\max \left\{ r\left( \left| {{S}^{*}} \right|\left( \left| {{S}^{*}} \right|+\left| {{T}^{*}} \right| \right) \right),r\left( \left| S \right|\left( \left| S \right|+\left| T \right| \right) \right) \right\},\]
and
\[\xi =\max \left\{ r\left( \left| {{T}^{*}} \right|\left( \left| {{T}^{*}} \right|+\left| {{S}^{*}} \right| \right) \right),r\left( \left| T \right|\left( \left| T \right|+\left| S \right| \right) \right) \right\}.\]
In particular, if $S,T$ are self-adjoint operators, then
\[{{\omega }^{2}}\left( \left[ \begin{matrix}
   O & S  \\
   {{T}} & O  \\
\end{matrix} \right] \right)\le \frac{1}{2}\max \left\{ r\left( \left| S \right|\left( \left| S \right|+\left| T \right| \right) \right),r\left( \left| T \right|\left( \left| T \right|+\left| S \right| \right) \right) \right\}.\]
\end{theorem}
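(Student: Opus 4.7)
The plan is to specialize Proposition \ref{4} to the case $r=2$ and then simplify each of the four norms appearing there by means of a factorization combined with the cyclic identity $r(AB)=r(BA)$ for the spectral radius.

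Setting $r=2$ in Proposition \ref{4}, the exponent $r/2$ becomes $1$, so (for instance) $\lambda$ collapses to
\[\lambda=\max\left\{\bigl\||S^{*}|^{2}+|S^{*}|^{\frac{1}{2}}|T^{*}||S^{*}|^{\frac{1}{2}}\bigr\|,\bigl\||S|^{2}+|S|^{\frac{1}{2}}|T||S|^{\frac{1}{2}}\bigr\|\right\},\]
and analogously for $\mu$.

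The key algebraic step I would then perform is to factor
\[|S^{*}|^{2}+|S^{*}|^{\frac{1}{2}}|T^{*}||S^{*}|^{\frac{1}{2}}=|S^{*}|^{\frac{1}{2}}\bigl(|S^{*}|+|T^{*}|\bigr)|S^{*}|^{\frac{1}{2}},\]
whose right-hand side is a positive operator. Since $\|P\|=r(P)$ for any positive $P$, this norm equals $r\bigl(|S^{*}|^{\frac{1}{2}}(|S^{*}|+|T^{*}|)|S^{*}|^{\frac{1}{2}}\bigr)$. A single application of $r(AB)=r(BA)$ with $A=|S^{*}|^{\frac{1}{2}}$ and $B=|S^{*}|^{\frac{1}{2}}(|S^{*}|+|T^{*}|)$ then converts this to $r\bigl(|S^{*}|(|S^{*}|+|T^{*}|)\bigr)$, which is exactly the first entry defining $\delta$. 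Running the same manipulation on each of the other three norms reduces $\lambda$ to $\delta$ and $\mu$ to $\xi$, yielding the claimed inequality.

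For the ``in particular'' statement, when $S$ and $T$ are self-adjoint one has $|S^{*}|=|S|$ and $|T^{*}|=|T|$, so the two quantities inside $\delta$ coincide, as do those inside $\xi$, and the bound simplifies as stated. The main obstacle is spotting the factorization of $|S^{*}|^{2}+|S^{*}|^{\frac{1}{2}}|T^{*}||S^{*}|^{\frac{1}{2}}$; once this is in hand, the rest is bookkeeping plus the cyclicity of the spectral radius.
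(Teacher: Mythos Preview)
Your proposal is correct and essentially identical to the paper's proof: both specialize Proposition \ref{4} at $r=2$, factor $|S^{*}|^{2}+|S^{*}|^{1/2}|T^{*}||S^{*}|^{1/2}=|S^{*}|^{1/2}(|S^{*}|+|T^{*}|)|S^{*}|^{1/2}$, invoke $\|P\|=r(P)$ for positive $P$, and then use the cyclic identity $r(AB)=r(BA)$ to arrive at $\delta$ and $\xi$. The self-adjoint corollary is handled the same way.
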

\begin{proof}
It follows from Proposition \ref{4} that
\[\begin{aligned}
  & {{\omega }^{2}}\left( \left[ \begin{matrix}
   O & S  \\
   {{T}^{*}} & O  \\
\end{matrix} \right] \right) \\ 
 & \le \frac{1}{2}\max \left\{ \left\| {{\left| {{S}^{*}} \right|}^{2}}+{{\left| {{S}^{*}} \right|}^{\frac{1}{2}}}\left| {{T}^{*}} \right|{{\left| {{S}^{*}} \right|}^{\frac{1}{2}}} \right\|,\left\| {{\left| S \right|}^{2}}+{{\left| S \right|}^{\frac{1}{2}}}\left| T \right|{{\left| S \right|}^{\frac{1}{2}}} \right\| \right\} \\ 
 & =\frac{1}{2}\max \left\{ \left\| {{\left| {{S}^{*}} \right|}^{\frac{1}{2}}}\left| {{S}^{*}} \right|{{\left| {{S}^{*}} \right|}^{\frac{1}{2}}}+{{\left| {{S}^{*}} \right|}^{\frac{1}{2}}}\left| {{T}^{*}} \right|{{\left| {{S}^{*}} \right|}^{\frac{1}{2}}} \right\|,\left\| {{\left| S \right|}^{\frac{1}{2}}}\left| S \right|{{\left| S \right|}^{\frac{1}{2}}}+{{\left| S \right|}^{\frac{1}{2}}}\left| T \right|{{\left| S \right|}^{\frac{1}{2}}} \right\| \right\} \\ 
 & =\frac{1}{2}\max \left\{ \left\| {{\left| {{S}^{*}} \right|}^{\frac{1}{2}}}\left( \left| {{S}^{*}} \right|+\left| {{T}^{*}} \right| \right){{\left| {{S}^{*}} \right|}^{\frac{1}{2}}} \right\|,\left\| {{\left| S \right|}^{\frac{1}{2}}}\left( \left| S \right|+\left| T \right| \right){{\left| S \right|}^{\frac{1}{2}}} \right\| \right\} \\ 
 & =\frac{1}{2}\max \left\{ r\left( {{\left| {{S}^{*}} \right|}^{\frac{1}{2}}}\left( \left| {{S}^{*}} \right|+\left| {{T}^{*}} \right| \right){{\left| {{S}^{*}} \right|}^{\frac{1}{2}}} \right),r\left( {{\left| S \right|}^{\frac{1}{2}}}\left( \left| S \right|+\left| T \right| \right){{\left| S \right|}^{\frac{1}{2}}} \right) \right\} \\ 
 & =\frac{1}{2}\max \left\{ r\left( \left| {{S}^{*}} \right|\left( \left| {{S}^{*}} \right|+\left| {{T}^{*}} \right| \right) \right),r\left( \left| S \right|\left( \left| S \right|+\left| T \right| \right) \right) \right\} \\ 
\end{aligned}\]
i.e.,
\[{{\omega }^{2}}\left( \left[ \begin{matrix}
   O & S  \\
   {{T}^{*}} & O  \\
\end{matrix} \right] \right)\le \frac{1}{2}\max \left\{ r\left( \left| {{S}^{*}} \right|\left( \left| {{S}^{*}} \right|+\left| {{T}^{*}} \right| \right) \right),r\left( \left| S \right|\left( \left| S \right|+\left| T \right| \right) \right) \right\}.\]
In the same way, we have
\[{{\omega }^{2}}\left( \left[ \begin{matrix}
   O & S  \\
   {{T}^{*}} & O  \\
\end{matrix} \right] \right)\le \frac{1}{2}\max \left\{ r\left( \left| {{T}^{*}} \right|\left( \left| {{T}^{*}} \right|+\left| {{S}^{*}} \right| \right) \right),r\left( \left| T \right|\left( \left| T \right|+\left| S \right| \right) \right) \right\}.\]
Together, these two inequalities imply the desired result.
\end{proof}

A notable specific case of Theorem \ref{5}, offering a refinement of the second inequality in \eqref{eq_equiv}, is presented as follows.
\begin{corollary}
Let $S\in \mathbb B\left( \mathbb H \right)$. Then
\begin{equation}\label{16}
{{\omega }^{2}}\left( S \right)\le \frac{1}{2}\max \left\{ r\left( \left| {{S}^{*}} \right|\left( \left| {{S}^{*}} \right|+\left| S \right| \right) \right),r\left( \left| S \right|\left( \left| {{S}^{*}} \right|+\left| S \right| \right) \right) \right\}.
\end{equation}
\end{corollary}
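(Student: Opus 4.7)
The plan is to derive this corollary as a direct specialization of Theorem \ref{5}. Specifically, I would set $T=S^{*}$ (equivalently $T^{*}=S$). Under this substitution, the $2\times 2$ operator matrix appearing in Theorem \ref{5} becomes
\[\left[\begin{matrix} O & S \\ S & O \end{matrix}\right],\]
whose numerical radius coincides with $\omega(S)$, as was explicitly recorded in the text preceding Corollary \ref{10}. So the left-hand side of Theorem \ref{5}, which is the square of this numerical radius, reduces precisely to $\omega^{2}(S)$.

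Next I would translate the absolute values $|T|$ and $|T^{*}|$ that appear in the quantities $\delta$ and $\xi$ of Theorem \ref{5}. Since $T=S^{*}$, we have $|T|=(T^{*}T)^{1/2}=(SS^{*})^{1/2}=|S^{*}|$ and $|T^{*}|=(TT^{*})^{1/2}=(S^{*}S)^{1/2}=|S|$. Substituting these identifications and using the commutativity of addition in the operator arguments, both $\delta$ and $\xi$ collapse to the same quantity
\[\max\bigl\{r\bigl(|S^{*}|(|S^{*}|+|S|)\bigr),\; r\bigl(|S|(|S^{*}|+|S|)\bigr)\bigr\},\]
so that the outer $\max\{\delta,\xi\}$ in Theorem \ref{5} is just this single expression, which is exactly the right-hand side of the claimed inequality \eqref{16}.

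There is no serious technical obstacle here: the corollary is essentially a bookkeeping specialization of Theorem \ref{5}, relying only on the identities $|S^{*}|^{2}=SS^{*}$ and $|S|^{2}=S^{*}S$, together with the already-noted fact that $\omega(\left[\begin{matrix} O & S \\ S & O \end{matrix}\right])=\omega(S)$. The only mild point of care is to verify that after the substitution $T=S^{*}$, the four expressions defining $\delta$ and $\xi$ pair up correctly into the two spectral radii displayed above, so that the outer maximum is taken over the intended set; this is immediate from inspection.
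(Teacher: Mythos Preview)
Your proposal is correct and matches the paper's approach exactly: the paper presents this corollary as ``a notable specific case of Theorem \ref{5}'' without further proof, and your substitution $T^{*}=S$ together with the identity $\omega\!\left(\left[\begin{smallmatrix} O & S \\ S & O \end{smallmatrix}\right]\right)=\omega(S)$ is precisely the intended specialization.
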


\begin{remark}
The constant $\frac{1}{2}$ is best possible in \eqref{16}. The verification of this statement is similar to Remark \ref{15}; therefore, its elaboration is omitted.
\end{remark}

Although Theorem \ref{24} has been proved for arbitrary operators, we can establish the following result if the operators are normal.
\begin{proposition}
Let $A, B\in \mathbb B\left( \mathbb H \right)$ be two normal operators. Then
\[\omega \left( \left[ \begin{matrix}
   O & A  \\
   {{B}^{*}} & O  \\
\end{matrix} \right] \right)\le \frac{1}{2}\max \left\{ \left\| \left| A \right|+\left| {{\left| B \right|}^{\frac{1}{2}}}{{\left| A \right|}^{\frac{1}{2}}} \right| \right\|,\left\| \left| B \right|+\left| {{\left| A \right|}^{\frac{1}{2}}}{{\left| B \right|}^{\frac{1}{2}}} \right| \right\| \right\}.\]
In particular,
\[\left\| A \right\|\le \frac{1}{2}\max \left\{ \left\| \left| A \right|+\left| {{\left| {{A}^{*}} \right|}^{\frac{1}{2}}}{{\left| A \right|}^{\frac{1}{2}}} \right| \right\|,\left\| \left| {{A}^{*}} \right|+\left| {{\left| A \right|}^{\frac{1}{2}}}{{\left| {{A}^{*}} \right|}^{\frac{1}{2}}} \right| \right\| \right\}.\]
\end{proposition}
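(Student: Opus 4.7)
The plan is to emulate the proof of Theorem \ref{24}, but to replace the self-adjoint block-matrix step by a direct appeal to the normal-operator extension of Lemma \ref{2} (mentioned in the remark immediately after that lemma). Since $A$ and $B$ are normal, so is $e^{i\theta}B$ for every $\theta\in\mathbb{R}$, and $|e^{i\theta}B|=|B|$. Hence the normal-operator analogue of Lemma \ref{2} applied to the pair $A,\,e^{i\theta}B$ yields
\[\|A+e^{i\theta}B\|\le \||A|+|B|\|.\]

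Next, I would apply Lemma \ref{k1} to the positive operators $|A|$ and $|B|$ to bound the right-hand side by
\[\max\left\{\left\||A|+\left||B|^{\frac{1}{2}}|A|^{\frac{1}{2}}\right|\right\|,\;\left\||B|+\left||A|^{\frac{1}{2}}|B|^{\frac{1}{2}}\right|\right\|\right\}.\]
Because this bound is independent of $\theta$, taking the supremum of the left-hand side over $\theta\in\mathbb{R}$ and invoking Lemma \ref{3} immediately produces the first displayed inequality of the proposition.

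For the ``In particular'' statement I would substitute $B=A^{*}$; since $A$ is normal, so is $A^{*}$, so the hypothesis of the first part is preserved. The off-diagonal block matrix then becomes $\left[\begin{matrix} O & A \\ A & O \end{matrix}\right]$, which is itself normal (a direct check using $AA^{*}=A^{*}A$). Consequently its numerical radius coincides with its operator norm, and this common value equals $\|A\|$ by \eqref{pomoc}. Combining this with the first inequality applied at $B=A^{*}$ yields the asserted bound on $\|A\|$.

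The only delicate point is passing from the matrix version of Lemma \ref{2} (attributed in the paper to [SGM]) to the operator version needed here; this goes through without real difficulty, since for normal $A$ we still have $|A|=|A^{*}|$ together with a unitary polar phase on the support, so the scalar-type arguments underlying the matrix proof extend to operators by a standard spectral or finite-rank approximation argument. Note also that for normal $A$ one has $|A|=|A^{*}|$, so the ``In particular'' bound is in fact an equality, confirming that the constant $\tfrac{1}{2}$ in the first inequality cannot be improved.
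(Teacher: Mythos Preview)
Your proposal is correct and follows essentially the same route as the paper's own proof: apply the normal-operator extension of Lemma~\ref{2} to get $\|A+B\|\le\|\,|A|+|B|\,\|$, then Lemma~\ref{k1} to the positive pair $|A|,|B|$, replace $B$ by $e^{i\theta}B$, take the supremum over $\theta$, and invoke Lemma~\ref{3}; the particular case is obtained by setting $B^{*}=A$ and using that the numerical radius coincides with the operator norm for normal operators. Your additional remarks---flagging the passage from the matrix version of Lemma~\ref{2} in \cite{SGM} to the operator setting, and observing that for normal $A$ the ``In particular'' inequality is actually an equality (since $|A|=|A^{*}|$), hence the constant $\tfrac{1}{2}$ is sharp---go a bit beyond the paper but are accurate and worth noting.
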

\begin{proof}
It follows from Lemmas \ref{k1} and \ref{2} that
\[\begin{aligned}
  & \left\| A+B \right\| \\ 
 & \le \left\| \left| A \right|+\left| B \right| \right\| \\ 
 & \le \max \left\{ \left\| \left| A \right|+\left| {{\left| B \right|}^{\frac{1}{2}}}{{\left| A \right|}^{\frac{1}{2}}} \right| \right\|,\left\| \left| B \right|+\left| {{\left| A \right|}^{\frac{1}{2}}}{{\left| B \right|}^{\frac{1}{2}}} \right| \right\| \right\} \\ 
\end{aligned}\]
i.e.,
\begin{equation}\label{7}
\left\| A+B \right\|\le \max \left\{ \left\| \left| A \right|+\left| {{\left| B \right|}^{\frac{1}{2}}}{{\left| A \right|}^{\frac{1}{2}}} \right| \right\|,\left\| \left| B \right|+\left| {{\left| A \right|}^{\frac{1}{2}}}{{\left| B \right|}^{\frac{1}{2}}} \right| \right\| \right\}.
\end{equation}
If we replace $B$ by ${{e}^{i\theta }}B$, and then taking the supremum over $\theta \in \mathbb{R}$, we obtain
\[\omega \left( \left[ \begin{matrix}
   O & A  \\
   {{B}^{*}} & O  \\
\end{matrix} \right] \right)\le \frac{1}{2}\max \left\{ \left\| \left| A \right|+\left| {{\left| B \right|}^{\frac{1}{2}}}{{\left| A \right|}^{\frac{1}{2}}} \right| \right\|,\left\| \left| B \right|+\left| {{\left| A \right|}^{\frac{1}{2}}}{{\left| B \right|}^{\frac{1}{2}}} \right| \right\| \right\},\]
as required.

The second inequality can be obtained from the first inequality by setting $B^* = A$ and using the fact that, for normal operators, the numerical radius equals the operator norm.
\end{proof}

The following theorem gives us a new bound for the operator norm.
\begin{theorem}\label{6}
Let $S\in \mathbb B\left( \mathbb H \right)$. Then
\[\left\| S \right\|\le \omega \left( S \right)+\sqrt{r\left( \left| \Re S \right|\left| \Im S \right| \right)}.\]
\end{theorem}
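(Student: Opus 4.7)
The plan is to use the Cartesian decomposition $S=A+iB$ with $A=\Re S$ and $B=\Im S$, and to reduce $\|S\|$ to a sum of the positive operators $|A|$ and $|B|$, for which Lemma~\ref{k1} is well suited.

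First, since $A$ is self-adjoint and $iB$ is skew-adjoint, both $A$ and $iB$ are normal. I would invoke the normal-operator version of Lemma~\ref{2} (mentioned in the remark after that lemma, citing \cite{SGM}) to obtain
$$\|S\|=\|A+iB\|\le \bigl\||A|+|iB|\bigr\|=\bigl\||A|+|B|\bigr\|,$$
using that $|iB|=((iB)^*(iB))^{1/2}=(B^2)^{1/2}=|B|$.

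Next, I would apply Lemma~\ref{k1} to the positive pair $|A|,|B|$, which yields
$$\bigl\||A|+|B|\bigr\|\le \max\Bigl\{\bigl\||A|+\bigl||B|^{1/2}|A|^{1/2}\bigr|\bigr\|,\ \bigl\||B|+\bigl||A|^{1/2}|B|^{1/2}\bigr|\bigr\|\Bigr\}.$$
Inside each term I would apply the ordinary triangle inequality, use $\bigl\||X|\bigr\|=\|X\|$, and then run exactly the simplification already performed at the end of the proof of Corollary~\ref{10}: namely $\bigl\||B|^{1/2}|A|^{1/2}\bigr\|^{2}=\bigl\||A|^{1/2}|B||A|^{1/2}\bigr\|$, the latter norm equals the spectral radius since the operator is positive, and cyclicity of the spectral radius gives $r(|A|^{1/2}|B||A|^{1/2})=r(|A||B|)$. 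Each candidate in the max then reduces to $\|A\|+\sqrt{r(|A||B|)}$ or $\|B\|+\sqrt{r(|A||B|)}$, so
$$\bigl\||A|+|B|\bigr\|\le \max\{\|A\|,\|B\|\}+\sqrt{r(|A||B|)}.$$
Finally, inequality \eqref{21} gives $\max\{\|\Re S\|,\|\Im S\|\}\le\omega(S)$, and combining everything produces the claimed bound.

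The main subtlety is the very first step: Lemma~\ref{2} as stated in the paper requires both summands to be self-adjoint, whereas $iB$ is only skew-adjoint, so the normal-operator extension of that triangle inequality is essential. An alternative route that avoids this appeal would be to expand $\|S\|^{2}=\|S^{*}S\|=\|A^{2}+B^{2}+i(AB-BA)\|$ and to bound the commutator via Lemma~\ref{14} or Lemma~\ref{22}; this path is more delicate and does not obviously deliver the clean spectral-radius term $\sqrt{r(|\Re S||\Im S|)}$, which is why the route through $\bigl\||A|+|B|\bigr\|$ appears preferable.
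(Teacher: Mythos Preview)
Your proposal is correct and follows essentially the same route as the paper's proof: the paper also invokes the normal-operator version of the triangle inequality to pass from $\|A+iB\|$ to the display involving $|A|,|B|$ (it cites the already-combined inequality \eqref{7} rather than stating Lemma~\ref{2} and Lemma~\ref{k1} separately), then applies the triangle inequality inside the $\max$, reduces both candidates to $\max\{\|\Re S\|,\|\Im S\|\}+\bigl\||\Re S|^{1/2}|\Im S|^{1/2}\bigr\|$, bounds the first term by $\omega(S)$ via \eqref{21}, and rewrites the second as $\sqrt{r(|\Re S||\Im S|)}$. Your identification of the subtlety---that the self-adjoint hypothesis of Lemma~\ref{2} must be relaxed to normality to handle the skew-adjoint summand $iB$---is exactly the point the paper flags at the outset of its proof.
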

\begin{proof}
We know that if $T$ is a normal operator, then $iT$ is also a normal operator. So, from \eqref{7}, we can write
\[\left\| A+iB \right\|\le \max \left\{ \left\| \left| A \right|+\left| {{\left| B \right|}^{\frac{1}{2}}}{{\left| A \right|}^{\frac{1}{2}}} \right| \right\|,\left\| \left| B \right|+\left| {{\left| A \right|}^{\frac{1}{2}}}{{\left| B \right|}^{\frac{1}{2}}} \right| \right\| \right\}.\]
Let $S=A+iB$ be the Cartesian decomposition of $S$, where $A=\Re S$ and $B=\Im S$. We obtain
\[\begin{aligned}
   \left\| S \right\|&\le \max \left\{ \left\| \left| \Re S \right|+\left| {{\left| \Im S \right|}^{\frac{1}{2}}}{{\left| \Re S \right|}^{\frac{1}{2}}} \right| \right\|,\left\| \left| \Im S \right|+\left| {{\left| \Re S \right|}^{\frac{1}{2}}}{{\left| \Im S \right|}^{\frac{1}{2}}} \right| \right\| \right\} \\ 
 & \le \max \left\{ \left\| \;\left| \Re S \right| \;\right\|+\left\| \;\left| {{\left| \Im S \right|}^{\frac{1}{2}}}{{\left| \Re S \right|}^{\frac{1}{2}}} \right|\; \right\|,\left\| \;\left| \Im S \right| \;\right\|+\left\| \;\left| {{\left| \Re S \right|}^{\frac{1}{2}}}{{\left| \Im S \right|}^{\frac{1}{2}}} \right|\; \right\| \right\} \\ 
 & =\max \left\{ \left\| \Re S \right\|+\left\| {{\left| \Im S \right|}^{\frac{1}{2}}}{{\left| \Re S \right|}^{\frac{1}{2}}} \right\|,\left\| \Im S \right\|+\left\| {{\left| \Re S \right|}^{\frac{1}{2}}}{{\left| \Im S \right|}^{\frac{1}{2}}} \right\| \right\} \\ 
 & =\max \left\{ \left\| \Re S \right\|+\left\| {{\left| \Re S \right|}^{\frac{1}{2}}}{{\left| \Im S \right|}^{\frac{1}{2}}} \right\|,\left\| \Im S \right\|+\left\| {{\left| RS \right|}^{\frac{1}{2}}}{{\left| \Im S \right|}^{\frac{1}{2}}} \right\| \right\} \\ 
 & =\max \left\{ \left\| \Re S \right\|,\left\| \Im S \right\| \right\}+\left\| {{\left| \Re S \right|}^{\frac{1}{2}}}{{\left| \Im S \right|}^{\frac{1}{2}}} \right\| \\ 
 \end{aligned}\]
i.e.,
\[\left\| S \right\|\le \max \left\{ \left\| \Re S \right\|,\left\| \Im S \right\| \right\}+\left\| {{\left| \Re S \right|}^{\frac{1}{2}}}{{\left| \Im S \right|}^{\frac{1}{2}}} \right\|.\]
We get from \eqref{21} that
	\[\left\| S \right\|\le \omega \left( S \right)+\left\| {{\left| \Re S \right|}^{\frac{1}{2}}}{{\left| \Im S \right|}^{\frac{1}{2}}} \right\|.\]
On the other hand, we know that
	\[\left\| {{\left| \Re S \right|}^{\frac{1}{2}}}{{\left| \Im S \right|}^{\frac{1}{2}}} \right\|={{\left\| {{\left| \Re S \right|}^{\frac{1}{2}}}{{\left| \Im S \right|}^{\frac{1}{2}}}{{\left| \Im S \right|}^{\frac{1}{2}}}{{\left| \Re S \right|}^{\frac{1}{2}}} \right\|}^{\frac{1}{2}}}=\sqrt{r\left( \left| \Re S \right|\left| \Im S \right| \right)},\]
which in turn implies the desired result.
\end{proof}

\begin{theorem}\label{8}
Let $S\in \mathbb B\left( \mathbb H \right)$ be an accretive operator. Then
\[\frac{\sqrt{3}}{3}\left\| S \right\|\le \omega \left( S \right).\]
The above inequality is also true when $S$ is a dissipative operator.
\end{theorem}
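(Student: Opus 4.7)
The plan is to relate $\|S\|^2 = \|S^*S\|$ directly to $\omega(S)^2$ by exploiting the Cartesian decomposition $S = A + iB$ with $A = \Re S$ self-adjoint and positive (by accretivity), and $B = \Im S$ self-adjoint. The target inequality is equivalent to $\|S\|^2 \le 3\,\omega(S)^2$, so it is natural to estimate $\|Sx\|^2$ pointwise on unit vectors and bound each piece by $\omega(S)^2$.

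First, I would expand $S^*S = (A - iB)(A + iB) = A^2 + B^2 + i(AB - BA)$. Since $AB - BA$ is skew-adjoint, the operator $i(AB-BA)$ is self-adjoint, so for any unit vector $x \in \mathbb{H}$ one has
\[
\|Sx\|^2 = \langle S^*Sx, x\rangle = \|Ax\|^2 + \|Bx\|^2 + \langle i(AB - BA)x, x\rangle.
\]
This cleanly separates the ``diagonal'' contribution $\|Ax\|^2 + \|Bx\|^2$ from the commutator term.

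Next, I would bound the three pieces. The first two satisfy $\|Ax\|^2 \le \|A\|^2$ and $\|Bx\|^2 \le \|B\|^2$, and then \eqref{21} gives $\|A\|, \|B\| \le \omega(S)$, so together they contribute at most $2\,\omega(S)^2$. For the commutator, I would invoke Lemma \ref{14}: because $A$ is positive (this is the only place accretivity is used), $\|AB - BA\| \le \|A\|\|B\| \le \omega(S)^2$. Combining, $\|Sx\|^2 \le 3\,\omega(S)^2$ for every unit vector, and taking the supremum over $x$ yields $\|S\| \le \sqrt{3}\,\omega(S)$, which is the desired bound.

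For the dissipative case, I would reduce to the accretive case by considering $S' = -iS$: its real part is $\Re(-iS) = \Im S \ge O$, so $S'$ is accretive, while $\|S'\| = \|S\|$ and $\omega(S') = \omega(S)$. Applying the result just proved to $S'$ gives the same conclusion for $S$.

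The main obstacle is getting the constant $\sqrt{3}$ rather than $2$: the naive triangle inequality $\|AB - BA\| \le 2\|A\|\|B\|$ would produce $\|S\|^2 \le 4\,\omega(S)^2$, yielding only the trivial factor $\tfrac12$ from \eqref{eq_equiv}. It is essential to use Kittaneh's sharp commutator estimate from Lemma \ref{14}, whose hypothesis of positivity of $A$ is exactly what accretivity provides. Everything else in the argument is elementary manipulation of the Cartesian decomposition and \eqref{21}.
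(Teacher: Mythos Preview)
Your proof is correct and follows essentially the same route as the paper: expand via the Cartesian decomposition, bound the commutator of $\Re S$ and $\Im S$ by $\|\Re S\|\,\|\Im S\|$ using Lemma~\ref{14} (with positivity of $\Re S$ supplied by accretivity), and then apply \eqref{21} to conclude $\|S\|^2\le 3\,\omega(S)^2$. The only cosmetic differences are that the paper works with $SS^*$ at the norm level rather than $S^*S$ pointwise on unit vectors, and handles the dissipative case by repeating the argument rather than your reduction via $S'=-iS$; neither affects the substance.
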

\begin{proof}
According to the assumption $\Re S\ge O$. If we use the triangle inequality for the operator norm and Lemma \ref{14}, we can write
\[\begin{aligned}
   {{\left\| S \right\|}^{2}}&=\left\| S{{S}^{*}} \right\| \\ 
 & =\left\| \left( \Re S+i\Im S \right){{\left( \Re S+i\Im S \right)}^{*}} \right\| \\ 
 & =\left\| \left( \Re S+i\Im S \right)\left( \Re S-i\Im S \right) \right\| \\ 
 & =\left\| {{\left( \Re S \right)}^{2}}+{{\left( \Im S \right)}^{2}}-i\left( \left( \Re S \right)\left( \Im S \right)-\left( \Im S \right)\left( \Re S \right) \right) \right\| \\ 
 & \le\left\| {{\left( \Re S \right)}^{2}}+{{\left( \Im S \right)}^{2}} \right\|+\left\| \left( \Re S \right)\left( \Im S \right)-\left( \Im S \right)\left( \Re S \right) \right\| \\ 
 & \le \left\| {{\left( \Re S \right)}^{2}} \right\|+\left\| {{\left( \Im S \right)}^{2}} \right\|+\left\| \left( \Re S \right)\left( \Im S \right)-\left( \Im S \right)\left( \Re S \right) \right\| \\ 
 & \le {{\left\| \Re S \right\|}^{2}}+{{\left\| \Im S \right\|}^{2}}+\left\| \Re S \right\|\left\| \Im S \right\|  
\end{aligned}\]
i.e.,
\[{{\left\| S \right\|}^{2}}\le {{\left\| \Re S \right\|}^{2}}+{{\left\| \Im S \right\|}^{2}}+\left\| \Re S \right\|\left\| \Im S \right\|.\]
On the other hand, if we apply \eqref{21}, we infer that
\[{{\left\| \Re S \right\|}^{2}}+{{\left\| \Im S \right\|}^{2}}+\left\| \Re S \right\|\left\| \Im S \right\|\le 3{{\omega }^{2}}\left( S \right).\]
Indeed, we have shown that
\[{{\left\| S \right\|}^{2}}\le 3{{\omega }^{2}}\left( S \right),\]
which is equivalent to the desired result.

The other case (when $\Im S\ge O$) can be proven similarly; we delete the details.
\end{proof}

We obtain the well-known inequality 
\[\omega \left( ST \right)\le 4\omega \left( S \right)\omega \left( T \right),\; S,T\in \mathbb B\left( \mathbb H \right)\]
 from \eqref{eq_equiv} and the fact that the operator norm is sub-multiplicative.
We can enhance this approximation under specific conditions, as demonstrated in the final result of this paper.
\begin{corollary}
Let $S,T\in \mathbb B\left( \mathbb H \right)$. If 

\item[(i)] $S$ and $T$ are accretive,\\
or
\item[(ii)] $S$ and $T$ are dissipative, \\
or
\item[(iii)] $S$ is accretive (resp. dissipative) and $T$ is dissipative (resp. accretive),

then
\[\omega \left( ST \right)\le 3\omega \left( S \right)\omega \left( T \right).\]
\end{corollary}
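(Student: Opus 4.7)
The plan is to observe that Theorem \ref{8} does all the heavy lifting and that each of the three cases is just a bookkeeping exercise in which hypothesis applies to which factor. Concretely, in case (i) both $S$ and $T$ are accretive, so Theorem \ref{8} applies to each and yields $\|S\|\le\sqrt{3}\,\omega(S)$ and $\|T\|\le\sqrt{3}\,\omega(T)$. In case (ii) both are dissipative, and Theorem \ref{8} was stated to cover the dissipative case as well. In the mixed case (iii), one of $S,T$ is accretive and the other dissipative, so again Theorem \ref{8} applies to both, yielding the same two norm bounds.

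Given these bounds, the argument collapses to a one-line chain. Starting from the standard equivalence $\omega(ST)\le\|ST\|$ in \eqref{eq_equiv}, I would use sub-multiplicativity of the operator norm to get $\|ST\|\le\|S\|\|T\|$, and then insert the two Theorem \ref{8} bounds to conclude
\[
\omega(ST)\le\|ST\|\le\|S\|\|T\|\le\sqrt{3}\,\omega(S)\cdot\sqrt{3}\,\omega(T)=3\,\omega(S)\,\omega(T),
\]
which is the desired inequality with the improved constant $3$ in place of the universal $4$.

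There is no real obstacle here: the only thing to verify is that the three case conditions are precisely the ones under which Theorem \ref{8} applies to each factor separately, which they are by construction. I would present the proof as a short paragraph handling the three cases in parallel, noting that in every case both $S$ and $T$ satisfy the hypothesis of Theorem \ref{8}, followed by the three-step inequality above.
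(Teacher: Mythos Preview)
Your proposal is correct and is essentially identical to the paper's proof: the paper also invokes Theorem \ref{8} to bound $\|S\|$ and $\|T\|$ (writing $\frac{3}{\sqrt{3}}$ where you write $\sqrt{3}$) and then chains $\omega(ST)\le\|ST\|\le\|S\|\|T\|\le 3\,\omega(S)\,\omega(T)$.
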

\begin{proof}
It follows from Theorem \ref{8} that
\[\begin{aligned}
   \omega \left( ST \right)&\le \left\| ST \right\| \\ 
 & \le \left\| S \right\|\left\| T \right\| \\ 
 & \le \frac{3}{\sqrt{3}}\omega \left( S \right)\cdot\frac{3}{\sqrt{3}}\omega \left( T \right) \\ 
 & =3\omega \left( S \right)\omega \left( T \right),  
\end{aligned}\]
as required
\end{proof}

\begin{remark}
Let the assumptions of Theorem \ref{8} hold. Since 
\[\begin{aligned}
   {{\left\| S \right\|}^{2}}&=\left\| S{{S}^{*}} \right\| \\ 
 & =\left\| \left( \Re S+i\Im S \right)\left( \Re S-i\Im S \right) \right\| \\ 
 & =\left\| {{\left( \Re S \right)}^{2}}+{{\left( \Im S \right)}^{2}}-i\left( \left( \Re S \right)\left( \Im S \right)-\left( \Im S \right)\left( \Re S \right) \right) \right\| \\ 
 & \le \left\| {{\left( \Re S \right)}^{2}}+{{\left( \Im S \right)}^{2}} \right\|+\left\| \left( \Re S \right)\left( \Im S \right)-\left( \Im S \right)\left( \Re S \right) \right\| \\ 
 & =\left\| {{\left( \frac{S+{{S}^{*}}}{2} \right)}^{2}}+{{\left( \frac{S-{{S}^{*}}}{2i} \right)}^{2}} \right\|+\left\| \left( \Re S \right)\left( \Im S \right)-\left( IS \right)\left( \Re S \right) \right\| \\ 
 & =\frac{1}{2}\left\| S{{S}^{*}}+{{S}^{*}}S \right\|+\left\| \left( \Re S \right)\left( \Im S \right)-\left( \Im S \right)\left( \Re S \right) \right\| \\ 
 & \le \frac{1}{2}\left\| S{{S}^{*}}+{{S}^{*}}S \right\|+\left\| \Re S \right\|\left\| \Im S \right\|,  
\end{aligned}\]
we have
\begin{equation}\label{18}
{{\left\| S \right\|}^{2}}\le \frac{1}{2}\left\| S{{S}^{*}}+{{S}^{*}}S \right\|+\left\| \Re S \right\|\left\| \Im S \right\|.
\end{equation}
\end{remark}

The inequality $\left\| {{S}^{2}} \right\|\le {{\left\| S \right\|}^{2}}$ for any bounded linear operator $S\in \mathbb B\left( \mathbb H \right)$ is a fundamental result in operator theory. As a consequence, it is important to establish upper bounds for the nonnegative difference ${{\left\| S \right\|}^{2}}-\left\| {{S}^{2}} \right\|$ under various assumptions. In the following result, we aim to derive such an inequality.

\begin{theorem}
Let $S\in \mathbb B\left( \mathbb H \right)$ be an accretive operator. Then
\begin{equation}\label{19}
{{\left\| S \right\|}^{2}}-\left\| {{S}^{2}} \right\|\le 2\left\| \Re S \right\|\left\| \Im S \right\|.
\end{equation}
The above inequality is also true when $S$ is a dissipative operator.
\end{theorem}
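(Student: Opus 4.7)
The plan is to reduce the gap $\|S\|^2 - \|S^2\|$ to a self-commutator norm $\|S^*S - SS^*\|$, and then invoke Lemma \ref{14} under the positivity hypothesis on $\Re S$ (or $\Im S$). The key algebraic observation is that both $\|S\|^4$ and $\|S^2\|^2$ can be rewritten so that they differ only by a commutator sandwiched between $S$ and $S^*$.

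Concretely, since $SS^*$ is positive, $\|S\|^4 = \|(SS^*)^2\| = \|SS^*SS^*\| = \|S(S^*S)S^*\|$, while $\|S^2\|^2 = \|S^2 S^{*2}\| = \|S(SS^*)S^*\|$. Subtracting and applying the (reverse) triangle inequality yields
\[
\|S\|^4 - \|S^2\|^2 \le \bigl\|S(S^*S - SS^*)S^*\bigr\| \le \|S\|^2\,\|S^*S - SS^*\|.
\]
Factoring the left-hand side as $(\|S\|^2 - \|S^2\|)(\|S\|^2 + \|S^2\|)$ and using $\|S\|^2 \le \|S\|^2 + \|S^2\|$, one gets the clean estimate
\[
\|S\|^2 - \|S^2\| \le \|S^*S - SS^*\|.
\]

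Next I would compute the self-commutator in terms of the Cartesian decomposition $S = \Re S + i\Im S$: a direct expansion gives $S^*S - SS^* = 2i\bigl((\Re S)(\Im S) - (\Im S)(\Re S)\bigr)$, so
\[
\|S^*S - SS^*\| = 2\bigl\|(\Re S)(\Im S) - (\Im S)(\Re S)\bigr\|.
\]
Under the accretive assumption, $\Re S \ge O$ is positive, so Lemma \ref{14} applies to the commutator and gives $\|(\Re S)(\Im S) - (\Im S)(\Re S)\| \le \|\Re S\|\|\Im S\|$. Chaining these together delivers \eqref{19}. The dissipative case is handled identically, using instead that $\Im S \ge O$ to apply Lemma \ref{14}.

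The main technical hurdle is the first step, i.e., spotting the identity $SS^*SS^* - S^2S^{*2} = S(S^*S - SS^*)S^*$, which is what converts a difference of operator norms into something controlled by the self-commutator; everything else is bookkeeping. A minor point to handle cleanly is the transition from $\|S\|^4 - \|S^2\|^2$ to $\|S\|^2 - \|S^2\|$, but this is immediate from the factorization and the trivial bound $\|S\|^2 + \|S^2\| \ge \|S\|^2$ (with the edge case $S = 0$ being trivial).
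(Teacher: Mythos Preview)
Your proof is correct and shares the overall architecture of the paper's argument: both reduce to the estimate $\|S\|^{2}-\|S^{2}\|\le \|S^{*}S-SS^{*}\|$, then rewrite the self-commutator as $2i\bigl((\Re S)(\Im S)-(\Im S)(\Re S)\bigr)$ and finish with Lemma~\ref{14}. The genuine difference is in how that first reduction is obtained. The paper invokes Lemma~\ref{17} with $A=S^{*}S$, $B=SS^{*}$ together with the identity $\bigl\|\,|S|\,|S^{*}|\,\bigr\|=\|S^{2}\|$ to get $\|S\|^{2}-\|S^{2}\|=\max\{\|S^{*}S\|,\|SS^{*}\|\}-\|(S^{*}S)^{1/2}(SS^{*})^{1/2}\|\le\|S^{*}S-SS^{*}\|$. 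Your route is more elementary: from the factorization $SS^{*}SS^{*}-S^{2}S^{*2}=S(S^{*}S-SS^{*})S^{*}$ and the reverse triangle inequality you obtain $\|S\|^{4}-\|S^{2}\|^{2}\le\|S\|^{2}\|S^{*}S-SS^{*}\|$, and then divide through using $\|S\|^{2}+\|S^{2}\|\ge\|S\|^{2}$. This avoids the need for Lemma~\ref{17} entirely, at the modest cost of handling $S=0$ separately; the paper's version, on the other hand, yields the intermediate inequality as an instance of a general norm inequality for differences of positive operators.
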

\begin{proof}
Notice that
\[\begin{aligned}
   \left\| \left( \Re S \right)\left( \Im S \right)-\left( \Im S \right)\left( \Re S \right) \right\|&=\left\| \left( \frac{S+{{S}^{*}}}{2} \right)\left( \frac{S-{{S}^{*}}}{2i} \right)-\left( \frac{S-{{S}^{*}}}{2i} \right)\left( \frac{S+{{S}^{*}}}{2} \right) \right\| \\ 
 & =\frac{1}{2}\left\| {{S}^{*}}S-S{{S}^{*}} \right\|.  
\end{aligned}\]
Therefore, from Lemma \ref{14}, we have
\[\begin{aligned}
   \left\| {{S}^{*}}S-S{{S}^{*}} \right\|&=2\left\| \left( \Re S \right)\left( IS \right)-\left( \Im S \right)\left( \Re S \right) \right\| \\ 
 & \le 2\left\| \Re S \right\|\left\| \Im S \right\|.  
\end{aligned}\]
On the other hand, Lemma \ref{17} ensures that
\[\begin{aligned}
   {{\left\| S \right\|}^{2}}-\left\| {{S}^{2}} \right\|&={{\left\| S \right\|}^{2}}-\left\| {{\left( {{S}^{*}}S \right)}^{\frac{1}{2}}}{{\left( S{{S}^{*}} \right)}^{\frac{1}{2}}} \right\| \\ 
 & =\max \left\{ \left\| {{S}^{*}}S \right\|,\left\| S{{S}^{*}} \right\| \right\}-\left\| {{\left( {{S}^{*}}S \right)}^{\frac{1}{2}}}{{\left( S{{S}^{*}} \right)}^{\frac{1}{2}}} \right\| \\ 
 & \le \left\| {{S}^{*}}S-S{{S}^{*}} \right\|,  
\end{aligned}\]
due to the fact that $\left\| \;\left| A \right|\left| B \right| \;\right\|=\left\| A{{B}^{*}} \right\|$ for any $A,B\in \mathbb B\left( \mathbb H \right)$ (see, e.g., \cite[(8)]{Kittaneh2004}).
\end{proof}

\begin{remark}
It follows from \eqref{18} and \eqref{19} that
\[{{\left\| S \right\|}^{2}}\le \left\| \Re S \right\|\left\| \Im S \right\|+\min \left\{ \left\| \Re S \right\|\left\| \Im S \right\|+\left\| {{S}^{2}} \right\|,\frac{1}{2}\left\| S{{S}^{*}}+{{S}^{*}}S \right\| \right\}.\]
\end{remark}

Let $\vertiii{ \cdot }$ denote any unitarily invariant norm, i.e., a norm with the property that $\vertiii{ UAV }=\vertiii{ A }$ for all $A$, and for all unitary $U$, $V$.
As an extension of the definition of the numerical radius, it has been defined in \cite{Abu-Omar} that 
\begin{equation}\label{23}
{{\omega }_{\vertiii{ \cdot }}}\left( A \right)=\underset{\theta \in \mathbb{R}}{\mathop{\sup }}\,\vertiii{ \Re\left( {{e}^{i\theta }}A \right) }.
\end{equation}
The essential properties and the inequalities associated with this concept have been examined in the same reference.

The following result concerning the unitarily invariant norms may be stated.
\begin{theorem}\label{25}
Let $S\in \mathbb B\left( \mathbb H \right)$.
\begin{itemize}
\item If $S$ is an accretive operator, then
\begin{equation}\label{26}
{{\vertiii{ S{{S}^{*}} }}}\le {{\vertiii{ {{\left( \Re S \right)}^{2}}+{{\left( \Im S \right)}^{2}} }}}+\left\| \Re S \right\|\;{{\vertiii{ \Im S }}}.
\end{equation}
\item If $S$ is a dissipative operator, then
\[{{\vertiii{ S{{S}^{*}} }}}\le {{\vertiii{ {{\left( \Re S \right)}^{2}}+{{\left( \Im S \right)}^{2}} }}}+\left\| \Im S \right\|\;{{\vertiii{ \Re S }}}.\]
\end{itemize}
\end{theorem}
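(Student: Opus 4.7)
The plan is to start from the Cartesian decomposition $S = \Re S + i\,\Im S$ and expand
\[
SS^{*} = (\Re S + i\,\Im S)(\Re S - i\,\Im S) = (\Re S)^{2} + (\Im S)^{2} - i\bigl[(\Re S)(\Im S) - (\Im S)(\Re S)\bigr],
\]
which is the same expansion already used in the proof of Theorem \ref{8} and in the remark containing \eqref{18}. Applying the triangle inequality for the unitarily invariant norm $\vertiii{\cdot}$ gives
\[
\vertiii{SS^{*}} \le \vertiii{(\Re S)^{2} + (\Im S)^{2}} + \vertiii{(\Re S)(\Im S) - (\Im S)(\Re S)},
\]
so the whole question reduces to controlling the commutator term by a unitarily invariant norm analogue of Lemma \ref{14}.

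For the accretive case, I would invoke Lemma \ref{22} with $A := \Re S$ and $B := \Im S$. Since $\Re S$ is self-adjoint, it is normal with Cartesian decomposition $\Re A = \Re S$ and $\Im A = 0$; by the accretivity hypothesis $\Re S \ge O$, and trivially $0 \ge O$, so both hypotheses of Lemma \ref{22} are satisfied. The lemma then yields
\[
\vertiii{(\Re S)(\Im S) - (\Im S)(\Re S)} \le \sqrt{\|\Re S\|^{2} + 0^{2}}\;\vertiii{\Im S} = \|\Re S\|\;\vertiii{\Im S},
\]
which combined with the previous display gives exactly \eqref{26}. I expect this to be the main conceptual step, since it is what lets us pass from the operator norm commutator estimate (Lemma \ref{14}) to the unitarily invariant norm setting; everything else is bookkeeping with the Cartesian decomposition.

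For the dissipative case the argument is symmetric: now $\Im S \ge O$, so I apply Lemma \ref{22} with $A := \Im S$ (again self-adjoint, hence normal, with $\Im A = 0 \ge O$) and $B := \Re S$, obtaining
\[
\vertiii{(\Im S)(\Re S) - (\Re S)(\Im S)} \le \|\Im S\|\;\vertiii{\Re S}.
\]
Because the unitarily invariant norm is invariant under multiplication by $-1$, the commutator $(\Re S)(\Im S) - (\Im S)(\Re S)$ has the same norm, and the triangle inequality applied to $SS^{*}$ produces the second stated bound. No extra obstacle arises; the only subtlety worth flagging in the write-up is that Lemma \ref{22} as stated requires a \emph{normal} $A$ with both $\Re A, \Im A \ge O$, and self-adjoint positive operators fit this hypothesis with $\Im A = 0$.
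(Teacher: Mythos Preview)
Your proof is correct and follows essentially the same route as the paper's: expand $SS^{*}$ via the Cartesian decomposition, apply the triangle inequality for $\vertiii{\cdot}$, and bound the commutator term using Lemma \ref{22} with $A=\Re S$ (respectively $A=\Im S$) and $B$ the other Cartesian part. You are in fact more explicit than the paper in checking that a positive self-adjoint $A$ satisfies the hypotheses $\Re A,\Im A\ge O$ of Lemma \ref{22} (via $\Im A=0$), which the paper leaves implicit.
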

\begin{proof}
Assume that $S$ is accretive. It follows from Lemma \ref{22} that
\[\begin{aligned}
  & {{\vertiii{ S{{S}^{*}} }}} \\ 
 & ={{\vertiii{ \left( \Re S+i\Im S \right)\left( \Re S-i\Im S \right) }}} \\ 
 & ={{\vertiii{ {{\left( \Re S \right)}^{2}}+{{\left( \Im S \right)}^{2}}-i\left( \left( \Re S \right)\left( \Im S \right)-\left( \Im S \right)\left( \Re S \right) \right) }}} \\ 
 & ={{\vertiii{ {{\left( \Re S \right)}^{2}}+{{\left( \Im S \right)}^{2}} }}}+{{\vertiii{ \left( \Re S \right)\left( \Im S \right)-\left( \Im S \right)\left( \Re S \right) }}} \\ 
 & \le {{\vertiii{ {{\left( \Re S \right)}^{2}}+{{\left( \Im S \right)}^{2}} }}}+\left\| \Re S \right\|\;{{\vertiii{ \Im S }}} \\ 
\end{aligned}\]
i.e.,
\[{{\vertiii{ S{{S}^{*}} }}}\le {{\vertiii{ {{\left( \Re S \right)}^{2}}+{{\left( \Im S \right)}^{2}} }}}+\left\| \Re S \right\|\;{{\vertiii{ \Im S }}},\]
which completes the inequality in the first case.

Now, assume that $S$ is dissipative. If we apply the same method as in the above, we can write
\[{{\vertiii{ S{{S}^{*}} }}}\le {{\vertiii{ {{\left( \Re S \right)}^{2}}+{{\left( \Im S \right)}^{2}} }}}+\left\| \Im S \right\|\;{{\vertiii{ \Re S }}},\]
thanks to Lemma \ref{22}.
\end{proof}

We can conclude the following result as a direct consequence of Theorem \ref{25}.
\begin{corollary}
Let $S\in \mathbb B\left( \mathbb H \right)$ be an accretive dissipative operator. Then
\[{{\vertiii{ S{{S}^{*}} }}}\le {{\vertiii{ {{\left( \Re S \right)}^{2}}+{{\left( \Im S \right)}^{2}} }}}+\min \left\{ \left\| \Re S \right\|\;{{\vertiii{ \Im S }}},\left\| \Im S \right\|\;{{\vertiii{ \Re S }}} \right\}.\]
\end{corollary}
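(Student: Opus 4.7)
The plan is short because this corollary is a direct consequence of Theorem \ref{25}. Recall from the introduction that $S$ being accretive-dissipative means that both $\Re S\ge O$ and $\Im S\ge O$ hold; in particular, $S$ is simultaneously accretive and dissipative, so both bullets of Theorem \ref{25} apply to the same operator $S$.

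First I would invoke the accretive case of Theorem \ref{25} to obtain
\[{\vertiii{ S{{S}^{*}} }}\le {\vertiii{ {{\left( \Re S \right)}^{2}}+{{\left( \Im S \right)}^{2}} }}+\left\| \Re S \right\|\;{\vertiii{ \Im S }},\]
and then apply the dissipative case of the same theorem to obtain
\[{\vertiii{ S{{S}^{*}} }}\le {\vertiii{ {{\left( \Re S \right)}^{2}}+{{\left( \Im S \right)}^{2}} }}+\left\| \Im S \right\|\;{\vertiii{ \Re S }}.\]
Since the left-hand side ${\vertiii{ S{{S}^{*}} }}$ is a fixed quantity and both inequalities hold simultaneously, the inequality must hold with the smaller of the two right-hand sides, which is exactly the claimed bound.

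There is no real obstacle in this argument: the work has already been done inside Theorem \ref{25}, where Lemma \ref{22} was applied to the commutator $(\Re S)(\Im S)-(\Im S)(\Re S)$ after using the normality of $\Re S$ (respectively $\Im S$) together with the appropriate positivity of its Cartesian parts. The only conceptual point worth emphasizing in the write-up is the observation that the accretive-dissipative hypothesis is precisely what allows \emph{both} halves of Theorem \ref{25} to be invoked on the same $S$, making the $\min$ legitimate rather than merely formal.
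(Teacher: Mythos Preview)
Your proposal is correct and matches the paper's own treatment: the corollary is stated there as ``a direct consequence of Theorem \ref{25}'' with no further proof, and the intended derivation is exactly the one you describe---apply both bullets of Theorem \ref{25} to the accretive-dissipative $S$ and take the smaller right-hand side.
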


Theorem \ref{25} also enables us to prove the following result.
\begin{theorem}
Let $S\in \mathbb B\left( \mathbb H \right)$ be an accretive operator. Then
\[{{\vertiii{ S{{S}^{*}} }}}\le {{\omega }_{\vertiii{ \cdot }}}\left( S \right)\left(2 {{\omega }_{\vertiii{ \cdot }}}\left( S \right)+\omega \left( S \right) \right).\]
The above inequality is also true when $S$ is a dissipative operator.
\end{theorem}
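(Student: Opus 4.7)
The plan is to use Theorem \ref{25} as the starting point and then convert the resulting right-hand side into an expression involving $\omega_{\vertiii{ \cdot }}(S)$ and $\omega(S)$. In the accretive case, Theorem \ref{25} provides
\[\vertiii{ S{{S}^{*}} }\le\vertiii{ {{\left( \Re S \right)}^{2}}+{{\left( \Im S \right)}^{2}} }+\left\| \Re S \right\|\;\vertiii{ \Im S },\]
so the task reduces to bounding the two summands separately; the dissipative case will then follow by invoking the corresponding estimate from the second bullet of Theorem \ref{25}.

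The second summand is the easy one. From \eqref{21} we have $\|\Re S\|\le\omega(S)$, and substituting $\theta=\pi/2$ in the definition \eqref{23} of $\omega_{\vertiii{ \cdot }}$ gives $\vertiii{ \Im S }\le\omega_{\vertiii{ \cdot }}(S)$ (since $\Re(iS)=-\Im S$). Multiplying these two estimates produces $\|\Re S\|\,\vertiii{ \Im S }\le\omega(S)\,\omega_{\vertiii{ \cdot }}(S)$, which is exactly the mixed term appearing in the claimed bound.

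For the first summand I would apply the triangle inequality for $\vertiii{ \cdot }$ to obtain $\vertiii{ {(\Re S)^2}+{(\Im S)^2} }\le\vertiii{ {(\Re S)^2} }+\vertiii{ {(\Im S)^2} }$, and then bound each term of the form $\vertiii{ A^2 }$ with $A$ self-adjoint. Combining the ideal property $\vertiii{ XY }\le\|X\|\,\vertiii{ Y }$ with the standard normalization $\|A\|\le\vertiii{ A }$ gives $\vertiii{ A^2 }\le\|A\|\,\vertiii{ A }\le\vertiii{ A }^2$. Applying this with $A=\Re S$ and $A=\Im S$, and using $\vertiii{ \Re S },\vertiii{ \Im S }\le\omega_{\vertiii{ \cdot }}(S)$ (the $\theta=0$ and $\theta=\pi/2$ cases of \eqref{23}), yields $\vertiii{ {(\Re S)^2}+{(\Im S)^2} }\le 2\omega_{\vertiii{ \cdot }}(S)^{2}$. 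Summing with the bound from the previous paragraph produces the claimed inequality, and repeating the argument with the second estimate from Theorem \ref{25} handles the dissipative case.

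I expect the delicate point to be the step $\vertiii{ A^2 }\le\vertiii{ A }^2$ for self-adjoint $A$: a direct expansion $(\Re S)^{2}+(\Im S)^{2}=\tfrac12(SS^{*}+S^{*}S)$ would merely loop back to norms of $SS^{*}$-type expressions and would not recover the $\omega_{\vertiii{ \cdot }}(S)^{2}$ factor. It is precisely the splitting via the triangle inequality into $\vertiii{ \Re S }$ and $\vertiii{ \Im S }$, together with the passage $\vertiii{ A^2 }\le\|A\|\,\vertiii{ A }$ that ties together the operator norm and the unitarily invariant norm, that cleanly connects the output of Theorem \ref{25} to $\omega_{\vertiii{ \cdot }}(S)$ and delivers the target bound.
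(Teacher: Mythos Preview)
Your proposal is correct and follows essentially the same route as the paper: start from Theorem~\ref{25}, split $\vertiii{(\Re S)^2+(\Im S)^2}$ by the triangle inequality, bound each $\vertiii{(\Re S)^2}$, $\vertiii{(\Im S)^2}$ by $\omega_{\vertiii{\cdot}}(S)^2$, and bound the mixed term via $\|\Re S\|\le\omega(S)$ and $\vertiii{\Im S}\le\omega_{\vertiii{\cdot}}(S)$. You are actually more explicit than the paper at the step $\vertiii{A^2}\le\vertiii{A}^2$, supplying the ideal property $\vertiii{A^2}\le\|A\|\,\vertiii{A}$ together with the normalization $\|A\|\le\vertiii{A}$; the paper simply asserts $\vertiii{(\Re S)^2}+\vertiii{(\Im S)^2}\le 2\omega_{\vertiii{\cdot}}^2(S)$ without spelling this out.
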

\begin{proof}
Assume that $S$ is accretive. In this case, \eqref{26} holds. It observes from the triangle inequality for the unitarily invariant norm, \eqref{23}, and \eqref{21} that
\[\begin{aligned}
   {{\vertiii{ S{{S}^{*}} }}}&\le {{\vertiii{ {{\left( \Re S \right)}^{2}}+{{\left( \Im S \right)}^{2}} }}}+\left\| \Re S \right\|\;{{\vertiii{ \Im S }}} \\ 
 & \le {{\vertiii{ {{\left( \Re S \right)}^{2}} }}}+{{\vertiii{ {{\left( \Im S \right)}^{2}} }}}+\left\| \Re S \right\|\;{{\vertiii{ \Im S }}} \\ 
 & \le 2\omega _{\vertiii{ \cdot }}^{2}\left( S \right)+{{\omega }_{\vertiii{ \cdot }}}\left( S \right)\;\left\| \Re S \right\| \\ 
 & \le 2\omega _{\vertiii{ \cdot }}^{2}\left( S \right)+{{\omega }_{\vertiii{ \cdot }}}\left( S \right)\;\omega \left( S \right) \\ 
 & ={{\omega }_{\vertiii{ \cdot }}}\left( S \right)\left( 2{{\omega }_{\vertiii{\cdot }}}\left( S \right)+\omega \left( S \right) \right)  
\end{aligned}\]
i.e.,
\[{{\vertiii{ S{{S}^{*}} }}}\le {{\omega }_{\vertiii{ \cdot }}}\left( S \right)\left(2 {{\omega }_{\vertiii{ \cdot }}}\left( S \right)+\omega \left( S \right) \right).\]
The same result also holds when $S$ is dissipative.
\end{proof}

\section*{Declarations}
\begin{itemize}
\item Ethical approval: This declaration is not applicable.
\item Conflict of interest: All authors declare no conflicts of interest.
\item Authors' contribution: The authors have contributed equally to this work.
\item Funding: The authors did not receive funding to accomplish this work.
\item Availability of data and materials: This declaration is not applicable.
\end{itemize}

\vskip 0.7 true cm

\noindent{\tiny (M. Jalili) Department of Mathematics, Neyshabur Branch, Islamic Azad University, Neyshabur, Iran

\noindent \textit{E-mail address:} mrmjlili@iau.ac.ir}

\vskip 0.3 true cm 

\noindent{\tiny (H. R. Moradi) Department of Mathematics, Mashhad Branch, Islamic Azad University, Mashhad, Iran

\noindent \textit{E-mail address:} hrmoradi@mshdiau.ac.ir}

\end{document}